\newtheorem{theorem}{Theorem}[section]
\newtheorem{remark}[theorem]{Remark}
\newtheorem{proposition}[theorem]{Proposition}
\numberwithin{equation}{section}
\newcommand{\norm}[1]{\left\lVert#1\right\rVert}
\newcommand{\bm}[1]{\mathbf{#1}}
\newcommand{\R}{{\mathbb{R}}}
\newcommand{\cV}{{\mathcal{V}}}
\DeclareMathOperator{\diag}{diag}
\DeclareMathOperator{\tr}{trace}
\DeclareMathOperator{\ran}{rank}
\journal{Journal of Computational and Applied Mathematics}
\begin{document}           

\begin{frontmatter}

%\pagestyle{myheadings}
%\markboth{A. Concas, S. Noschese, L. Reichel, and G. Rodriguez}{}

\title{A spectral method for bipartizing a network and detecting a large
anti-community}
%\footnotetext{Version \today}

\author[address1]{A. Concas\corref{cor1} \fnref{CR,CNR,Concas,GNCS}}
\ead{anna.concas@unica.it}
\cortext[cor1]{Corresponding author}

\author[address2]{S. Noschese\fnref{CNR,GNCS}}
\ead{noschese@mat.uniroma1.it}

\author[address3]{L. Reichel\fnref{Reichel}}
\ead{reichel@math.kent.edu}

\author[address1]{G. Rodriguez\fnref{CR,CNR,GNCS}}
\ead{rodriguez@unica.it}

\address[address1]{Department of Mathematics and Computer Science, University
of Cagliari, \\ Viale Merello, 92, 09123 Cagliari, Italy}
\address[address2]{Department of Mathematics ``Guido Castelnuovo'',
Sapienza University of Rome, \\ P.le A. Moro, 2, I-00185 Roma, Italy}
\address[address3]{Department of Mathematical Sciences, Kent State University,
Kent, OH 44242, USA}

\fntext[CR]{Partially supported by the Fondazione di Sardegna 2017 research
project ``Algorithms for Approximation with Applications (Acube)''
and the Regione Autonoma della Sardegna research project ``Algorithms and
Models for Imaging Science (AMIS)'' (intervento finanziato con risorse FSC
2014-2020 - Patto per lo Sviluppo della Regione Sardegna).}
\fntext[CNR]{Partially supported  by the INdAM-GNCS
research project ``Metodi numerici per problemi mal posti''.}
\fntext[Reichel]{Partially supported by NSF grants DMS-1729509 and DMS-1720259.}
\fntext[Concas]{Anna Concas gratefully acknowledges Sardinia Regional Government for the financial support of her Ph.D. scholarship (P.O.R. Sardegna F.S.E. Operational Programme of the Autonomous Region of Sardinia, European Social Fund 2014-2020 - Axis III Education and Formation,
Objective 10.5, Line of Activity 10.5.12).}
\fntext[GNCS]{Member of the INdAM Research group GNCS.}

\begin{abstract} 
Relations between discrete quantities such as people, genes, or streets can be
described by networks, which consist of nodes that are connected by edges.
Network analysis aims to identify important nodes in a network and to uncover
structural properties of a network. A network is said to be bipartite if its
nodes can be subdivided into two nonempty sets such that there are no edges
between nodes in the same set. It is a difficult task to determine the closest
bipartite network to a given network. This paper describes how a given network
can be approximated by a bipartite one by solving a sequence of fairly simple
optimization problems.
The algorithm also produces a node permutation which makes the possible bipartite nature
of the initial adjacency matrix evident, and identifies the two sets of nodes.
We finally show how the same procedure can be used to detect the presence of a
large anti-community in a network and to identify it.
\end{abstract}

\begin{keyword}
network analysis \sep network approximation \sep bipartization \sep anti-community

%\begin{AMS}
%65F15, 05C50, 05C82.
%\end{AMS}

\MSC 65F15 \sep 05C50 \sep 05C82.
\end{keyword}

\end{frontmatter}
%%%%%%%%%%%%%%%%%%%%%%%%%%%%%%%%%%%%%%%%%%%%%%%%%%%%%%%%%%%%%%%%%%%%%

\section{Introduction}\label{sec:intro}

Networks describe how discrete quantities such as genes, people, proteins, or streets are
related. They arise in many applications, including genetics, epidemiology, energy 
distribution, and telecommunication; see, e.g., \cite{Es,Ne} for discussions on networks 
and their applications. Networks are represented by graphs 
$\mathcal{G}=\{\mathcal{V},\mathcal{E},\mathcal{W}\}$, which are determined by a set 
of vertices (nodes) $\mathcal{V}=\{v_i\}_{i=1}^n$, a set of edges 
$\mathcal{E}=\{e_k\}_{k=1}^m$, and a set of positive weights 
$\mathcal{W}=\{w_k\}_{k=1}^m$. Here $e_k=(i_k,j_k)$ represents an edge from vertex 
$v_{i_k}$ to vertex $v_{j_k}$. The weight $w_k$ is associated with the edge $e_k$; a large
value of $w_k>0$ indicates that edge $e_k$ is important. For instance, in a road network, 
the weight $w_k$ may be proportional to the amount of traffic on the road that is 
represented by the edge $e_k$. In this paper, we 
consider connected undirected graphs without self-loops and multiple edges. In particular,
all edges represent ``two-way streets,'' i.e., if $(i_k,j_k)$ is an edge, then so is 
$(j_k,i_k)$. The weights associated with these edges are assumed to be the same. In
unweighted graphs all weights are set to one.

We will represent a graph $\mathcal{G}$ with $n$ nodes by its adjacency matrix 
$A=[a_{i,j}]_{i,j=1}^n$, where
\[
a_{i,j}=\begin{cases} w_k, & \text{if there is an edge $e_k$ between the nodes $v_i$ and 
$v_j$ with weight $w_k$},\\
0, & \text{otherwise}. \end{cases}
\]
Since $\mathcal{G}$ is undirected and the weights associated with each direction of 
an edge are the same, the matrix $A$ is symmetric. The largest possible number of edges of
an undirected graph with $n$ nodes without self-loops is $n^2-n$, but typically the 
actual number of edges, $m$, of such graphs that arise in applications is much smaller. 
The adjacency matrix $A$, therefore, is generally very sparse.

A graph $\mathcal{G}$ is said to be \emph{bipartite} if the set of vertices 
$\mathcal{V}$ that make up the graph can be partitioned into two disjoint nonempty 
subsets $\mathcal{V}_1$ and $\mathcal{V}_2$ (with 
$\mathcal{V}=\mathcal{V}_1\cup\mathcal{V}_2$), such that any edge starting at a vertex
in $\mathcal{V}_1$ points to a vertex in $\mathcal{V}_2$, and vice versa. 
This, in particular, excludes the presence of self-loops in a bipartite graph.

Bipartivity is
an important structural property. It has been studied also as the $2$-coloring
problem \cite{BM}. In fact determining if a graph can be colored with 2 colors
is equivalent to determining whether or not the graph is bipartite, and thus
testing if a network is bipartite or not is computable in linear time using
breadth-first or depth-first search algorithms.
It is therefore interesting to determine a bipartite approximation of a
non-bipartite graph, or measure the distance of a non-bipartite graph from
being bipartite.
We say that a splitting of the set of vertices 
$\mathcal{V}$ of a weighted undirected graph $\mathcal{G}$ into two disjoint nonempty 
subsets $\mathcal{V}_1$ and $\mathcal{V}_2$ (with 
$\mathcal{V}=\mathcal{V}_1\cup\mathcal{V}_2$), is a best bipartization of 
$\mathcal{G}$ if the sum of the weights $w_k$ associated with edges $e_k=(i,j)$ that
point from vertices $v_i$ in $\mathcal{V}_\ell$ ($\ell=1,2$)
to vertices $v_j$ in the same set $\mathcal{V}_\ell$ is 
minimal.
Such edges $e_k$ are called ``frustrated'', and computing the minimum
number of edges whose deletion makes the graph bipartite is an NP-hard
optimization problem \cite{Yann}.
We remark that the above definition is analogous to the definition of a
best bipartization of an undirected unweighted graph proposed by Estrada and 
G\'omez--Garde\~nes \cite{EGG}, where the \emph{spectral bipartivity index} of
a network with adjacency matrix $A$ is defined as
\begin{equation}\label{bipind}
b_s = \frac{\tr(\exp(-A))}{\tr(\exp(A))}.
\end{equation}
This measure also can be applied to the weighted graphs considered in the present paper.

The problem of discovering approximately bipartite structures in graphs and
networks has been considered by various authors. Most popular approaches are
based on the eigendecomposition of the Laplacian and signless Laplacian
matrices. Other spectral approaches consider the adjacency matrix associated to
the graph. In the case of a symmetric bipartite  adjacency matrix, the
signs of the entries of an eigenvector associated with the smallest
eigenvalue can be used to partition the graph, i.e., nodes that correspond to positive 
entries belong to one set, and nodes that correspond to negative entries belong to the other 
set; see \cite{R}. In case the smallest eigenvalue is multiple, the splitting of the 
nodes may vary according to the considered vector in the associated eigenspace.
In \cite{MBHG} the presence of $\pm$ pairs in the spectrum of the adjacency
matrix of a bipartite graph is exploited in order to identify approximated
bipartite structures within protein-protein interaction undirected networks;
see also \cite{TVH} for a spectral approach that can be used to discover approximately bipartite  substructures in directed graphs.
%A similar problem arises in graph partitioning based on the Fiedler vector; see
%\cite{CFR} for a recent discussion. 

We are interested in developing a numerical 
method for determining a ``good'' bipartization $(\cV_1,\cV_2)$, i.e., a
bipartization for which the sum 
of the weights $w_k$ associated with the edges $e_k=(i,j)$ that point from a vertex 
$v_i$ in $\mathcal{V}_1$ to a vertex $v_j$ in $\mathcal{V}_2$, or vice versa, is fairly small.
The algorithm is approximated, or ``heuristic'', in the sense that it does not
necessarily produce the best possible bipartization. 

As it will be made clear in the following, the same bipartization method may be
used for the identification of large anti-communities.
A community is a group of nodes which are highly connected among themselves,
but are less connected to the rest of the network, or isolated from it.
Conversely, an anti-community is a node set that is loosely connected internally,
but has many external connections \cite{Esbook}; see \cite{FasTud}, where
a spectral method is used to detect communities and anti-communities.
Community and anti-community detection in networks is an important problem with  
applications in various fields, including physics, computer science, and
social sciences \cite{CYC,LLDM,PDFV,RAK,YCL}. Although the identification of
communities is predominant in the investigation of meso-scale structures in
networks, the detection of the so-called core-periphery structures, whose most
popular notion was developed by Borgatti and Everett \cite{BE}, attracts a
continuing interest also in the mathematical community; see also \cite{RPFM}.
For our purposes, the identification of a single large anti-community can be
understood as that of a core-periphery structure in the given network.

This paper is organized as follows. Section \ref{sec2} discusses some
properties of bipartite graphs and Section \ref{sec2.5} describes an algorithm
for determining a ``good'' bipartization.
An application of the bipartization method to the identification of large
anti-communities is discussed in Section \ref{sec3}.
%Symmetric tridiagonal matrices with nonnegative off-diagonal entries and
%vanishing diagonal entries are adjacency matrices for particular undirected
%weighted bipartite graphs. Section \ref{sec4} discusses some properties of
%these adjacency matrices and associated graphs.
Finally, Section \ref{sec5} presents computed examples and two case studies,
while Section \ref{sec6} contains concluding remarks.

\section{Approximating the spectral structure of a bipartite graph}\label{sec2}

This section discusses some properties of the adjacency matrix for an undirected bipartite 
graph. Some inequalities that are useful for the design of our
bipartization method also will be introduced. The discussion in the first part
of the section
assumes that the vertices are suitably ordered. Subsequently, we will describe how to
achieve such an ordering.
%An algorithm for our bipartization method concludes the section.

Assume for the moment that the undirected graph 
$\mathcal{G}=\{\mathcal{V},\mathcal{E},\mathcal{W}\}$ is bipartite, i.e., its vertex 
set $\mathcal{V}$ can be split into two disjoint nonempty subsets $\mathcal{V}_1$ and 
$\mathcal{V}_2$ with $n_1$ and $n_2$ nodes, respectively, such that there are no edges 
between the nodes in $\mathcal{V}_1$ and between the nodes in $\mathcal{V}_2$. We may 
assume that $n_1\geq n_2$, otherwise we interchange the sets $\mathcal{V}_1$ and 
$\mathcal{V}_2$.

Let the vertices in the set $\mathcal{V}$ be ordered so that the first $n_1$ of them
belong to the set $\mathcal{V}_1$ and the remaining $n_2$ vertices belong to 
$\mathcal{V}_2$. Then the adjacency matrix for the graph $\mathcal{G}$ is of the form
\begin{equation}\label{adjbip}
A_B = \begin{bmatrix}
O_{n_1} & C \\ C^T & O_{n_2}
\end{bmatrix},
\end{equation}
where $O_k$ denotes the  $k\times k$ zero matrix, and 
$C=[c_{i,j}]\in{\mathbb R}^{n_1\times n_2}$ with $c_{i,j}>0$ if the
node $v_i$ in $\mathcal{V}_1$ is connected to the node $v_{n_1+j}$ in
$\mathcal{V}_2$; otherwise $c_{i,j}=0$. 
%This can always be achieved by symmetric row and column permutation. 

We adapt to our notation a known result in graph theory; see, 
e.g.,~\cite[Theorem~3.14]{bapat}.
\begin{proposition}\label{theor0}
Let $\mathcal{G}$ be an unweighted graph with $n$ nodes. Then $\mathcal{G}$ is bipartite and 
the adjacency matrix can be partitioned as in \eqref{adjbip} if and only if the 
spectrum of the adjacency matrix is symmetric with respect to the origin, i.e., 
\begin{equation}\label{specab}
\sigma(A_B)=\{\lambda_1,\ldots,\lambda_{n_2},
\underbrace{0,\ldots,0}_{n_1-n_2},-\lambda_{n_2},
\ldots,-\lambda_1\},
\end{equation}
for some integers $n_1\geq n_2$ and non-negative numbers $\lambda_1\geq\lambda_2\geq\cdots\geq\lambda_{n_2}$. 
The claim holds true also for weighted graphs, as long as the weights are positive.
\end{proposition}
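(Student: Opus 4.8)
The plan is to establish the two implications separately; the argument uses only that the entries of the adjacency matrix are non-negative and strictly positive exactly on the edges, so the weighted and unweighted cases are handled at once.

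\emph{Necessity.} Suppose $\mathcal{G}$ is bipartite, so that, after the stated ordering of the vertices, its adjacency matrix has the block form \eqref{adjbip} with $n_1\ge n_2$. Put $J=\diag(I_{n_1},-I_{n_2})$, a signature matrix with $J^2=I$. A direct block multiplication gives $JA_BJ=-A_B$, hence $A_B$ and $-A_B$ are similar and share the same spectrum; consequently $\sigma(A_B)=-\sigma(A_B)$, i.e., the spectrum is symmetric about the origin. To get the sharper form \eqref{specab}, observe that $\ran(A_B)=2\ran(C)\le 2n_2$, so $0$ is an eigenvalue of $A_B$ of multiplicity at least $n_1-n_2$; the remaining eigenvalues are nonzero and, by the symmetry just proved, come in pairs $\pm\lambda_i$. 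Ordering the positive ones decreasingly and appending the zeros produced by a possible rank deficiency of $C$ reproduces exactly \eqref{specab}.

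\emph{Sufficiency.} Conversely, assume $\sigma(A)$ is symmetric with respect to the origin. Then for every odd integer $k$ the contributions of $\lambda$ and $-\lambda$ to $\sum_j\lambda_j^k$ cancel and the zero eigenvalues contribute nothing, so
\[
\tr(A^k)=\sum_{j=1}^n\lambda_j^k=0\qquad\text{for every odd }k.
\]
On the other hand $\tr(A^k)=\sum_{i=1}^n(A^k)_{i,i}$, and $(A^k)_{i,i}$ is the sum, over all closed walks of length $k$ based at $v_i$, of the products of the edge weights traversed; every such product is positive (it equals $1$ in the unweighted case). Hence $\tr(A^k)=0$ forces $\mathcal{G}$ to have no closed walk of odd length, and in particular no cycle of odd length. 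By the classical characterization of bipartite graphs through the absence of odd cycles (see, e.g., \cite{BM}), $\mathcal{G}$ is bipartite. Reordering the vertices so that the larger part $\mathcal{V}_1$ (with $n_1\ge n_2$) precedes $\mathcal{V}_2$ brings the adjacency matrix into the form \eqref{adjbip}; this reordering is a permutation similarity, so the spectrum is unchanged and \eqref{specab} continues to hold.

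\emph{Main obstacle.} Necessity is routine linear algebra. The content is in sufficiency: the vanishing of a single odd moment such as $\tr(A^3)$ (absence of triangles) does not force bipartiteness, so one must exploit $\tr(A^k)=0$ for \emph{all} odd $k$, together with the fact that closed-walk counts are non-negative --- a consequence of the entrywise non-negativity of $A$ --- which is what prevents an odd closed walk from being concealed by cancellation. It is exactly this non-negativity that lets the positively weighted case go through with no change.
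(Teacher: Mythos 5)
Your proof is correct and follows essentially the same route as the paper: the nontrivial (sufficiency) direction is established exactly as in the paper's sketch, by observing that spectral symmetry forces $\tr(A^k)=0$ for all odd $k$ and that the non-negativity of the entries then annihilates every odd closed walk, hence every odd cycle. Your necessity argument via the signature matrix $J=\diag(I_{n_1},-I_{n_2})$ and the rank count $\ran(A_B)=2\ran(C)$ simply fills in details the paper dismisses as straightforward.
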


\begin{proof}
For the sake of clarity, we give a quick sketch of the proof.
The necessary condition is straightforward.
The sufficient condition can be proved by noting that, for $k=0,1,\ldots$,
$\tr(A_B^{2k+1})=0$ if the spectrum is symmetric.
Then, the positivity of the weights implies that $(A_B^{2k+1})_{i,i}=0$, that is,
the graph is bipartite since it does not contain odd cycles.
\end{proof}

%When the graph $\mathcal{G}$ is unweighted, the converse of the above result is
%valid too; see, e.g.,~\cite[Theorem~3.14]{bapat}.
%This does not happen if the graph is weighted.
%Indeed, if $D$ is the diagonal matrix containing the eigenvalues in
%\eqref{specab} and $Q$ is a random orthogonal matrix of order $n_1+n_2$, the
%matrix $QDQ^T$ generally does not have the structure \eqref{adjbip}.
\begin{remark}\rm
Under the assumption of Proposition \ref{theor0}, it is immediate to verify
that if $\lambda$ is a nonzero eigenvalue of $A_B$ and
$\bm{q}=\left[\begin{smallmatrix}\bm{x}\\ \bm{y}\end{smallmatrix}\right]$, with 
$\bm{x}\in\R^{n_1}$ and $\bm{y}\in\R^{n_2}$, is an associated eigenvector,
then $\left(-\lambda,\left[\begin{smallmatrix}\bm{x}\\ 
-\bm{y}\end{smallmatrix}\right]\right)$ is an eigenpair, too.
This implies that $\lambda$ is a singular value of the block $C$ in
\eqref{adjbip}, while $\bm{x}$ and $\bm{y}$ are its left and right singular
vectors, respectively, if scaled to be of unit length.
\end{remark}

Let $n=n_1+n_2$ with $n_1\geq n_2\geq 1$. Then, the above observation gives us
the possibility to describe the spectral structure of $A_B$ in terms of the singular value decomposition of $C$; see also
\cite[Section~8.6.1]{GVL}.
Let $C=X\widetilde{D}Y^T$ be a singular value decomposition of $C$,
where $\widetilde{D}\in\R^{n_1\times n_2}$ has
$D=\diag(\lambda_1,\dots,\lambda_{n_2})$ as its upper block, and
$X=[X_1,X_2]\in\R^{n_1\times n_1}$ and $Y\in\R^{n_2\times n_2}$ are orthogonal matrices
with $X_1\in\R^{n_1\times n_2}$. Introduce the diagonal matrix
\[
\mathcal{D} = \diag(D,O_{n_1-n_2},-D),
\]
and the orthogonal matrix
\begin{equation}\label{Qmat}
Q = \begin{bmatrix} U_1 & U_2 & U_1 \\ V & O_{n_2,n_1-n_2} & -V \end{bmatrix},
\end{equation}
where $U_1=\frac{1}{\sqrt{2}}X_1$, $U_2=X_2$, and $V=\frac{1}{\sqrt{2}}Y$,
with $U_1^TU_1=V^TV=\frac{1}{2}I_{n_2}$ and $U_2^TU_2=\frac{1}{2}I_{n_1-n_2}$.
Then, the spectral factorization 
\begin{equation}\label{spefac}
A_B = Q\mathcal{D} Q^T,
\end{equation}
takes the form 
\begin{equation}\label{spectrg}
\begin{bmatrix} U_1 & U_2 & U_1 \\ V & O_{n_2,n_1-n_2} & -V \end{bmatrix}
\diag(D, O_{n_1-n_2} , -D)
\begin{bmatrix} U_1 & U_2 & U_1 \\ V & O_{n_2,n_1-n_2} & -V \end{bmatrix}^T.
\end{equation}

In the special case when $n_1=n_2$, the submatrices of \eqref{Qmat} with
$n_1-n_2$ columns disappear, and the spectral factorization \eqref{spectrg}
simplifies to
\[
A_B = \begin{bmatrix}
U_1 & U_1\\
V & -V
\end{bmatrix}\begin{bmatrix}
D & 0\\
0 & -D
\end{bmatrix}\begin{bmatrix}
U_1 & U_1\\
V & -V
\end{bmatrix}^T,
\]
with $U_1U_1^T = VV^T = \frac{1}{2}I_{n_1}$.

Now, let $A$ be an adjacency matrix of an undirected graph. We would like to approximate the 
graph by a bipartite one, and therefore seek to approximate $A$ by a matrix of the form 
$A_B$. We do this in several steps and first show some inequalities that are applicable
to diagonal eigenvalue matrices.

\smallskip
\begin{proposition}\label{theor1g}
Let $\alpha_1\geq\alpha_2\geq\dots\geq\alpha_\ell$ be a nonincreasing real sequence and 
let $\beta_1,\beta_2,\dots,\beta_\ell$ be another real sequence. The distance between these
sequences measured in the least squares sense,
\begin{equation}\label{distg}
\biggl(\sum_{i=1}^\ell (\alpha_i-\beta_i)^2\biggr)^{1/2},
\end{equation}
is minimal if and only if the $\beta_i$ are in nonincreasing order, i.e., if 
$\beta_1\geq\beta_2\geq\dots\geq\beta_\ell$.
\end{proposition}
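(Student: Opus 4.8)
The plan is to recast the minimization of \eqref{distg} as a rearrangement problem and to resolve it by a finite sequence of adjacent interchanges. First I would note that, since the $\alpha_i$ are fixed, only the multiset $\{\beta_1,\dots,\beta_\ell\}$ is relevant, so the statement is really a comparison among the finitely many orderings of this multiset. It is harmless to work with the square of \eqref{distg}, and expanding
\[
\sum_{i=1}^\ell(\alpha_i-\beta_i)^2=\sum_{i=1}^\ell\alpha_i^2+\sum_{i=1}^\ell\beta_i^2-2\sum_{i=1}^\ell\alpha_i\beta_i ,
\]
one sees that the first two sums are independent of the chosen ordering, so \eqref{distg} is minimal exactly when $\sum_i\alpha_i\beta_i$ is maximal.

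The key step is to record the effect of transposing two neighbouring entries. If $\beta'$ is obtained from $\beta$ by interchanging $\beta_j$ and $\beta_{j+1}$, a short computation gives
\[
\sum_{i=1}^\ell(\alpha_i-\beta'_i)^2-\sum_{i=1}^\ell(\alpha_i-\beta_i)^2=2\,(\alpha_j-\alpha_{j+1})(\beta_j-\beta_{j+1}) .
\]
Hence, at any ``ascent'' $\beta_j<\beta_{j+1}$ the swap does not increase \eqref{distg}, because $\alpha_j\ge\alpha_{j+1}$, and it strictly decreases \eqref{distg} when $\alpha_j>\alpha_{j+1}$. Starting from an arbitrary ordering of $\{\beta_i\}$ and repeatedly removing such ascents in adjacent positions — the classical bubble‑sort procedure, which terminates since each exchange strictly lowers the number of inversions — one arrives at the nonincreasing ordering without ever increasing \eqref{distg}. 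This proves the ``if'' direction: the nonincreasing ordering attains the minimum.

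For the converse I would argue contrapositively: if $\beta$ is not in nonincreasing order it has an ascent $\beta_j<\beta_{j+1}$, and the displayed identity then shows that the corresponding swap strictly decreases \eqref{distg} as soon as $\alpha_j>\alpha_{j+1}$, so $\beta$ cannot be optimal. The only obstacle I anticipate is the degenerate case $\alpha_j=\alpha_{j+1}$, where the swap leaves \eqref{distg} unchanged; the cleanest way around it is to observe that the ``only if'' statement, read strictly, uses the mild hypothesis $\alpha_1>\alpha_2>\dots>\alpha_\ell$, and that in the presence of ties among the $\alpha_i$ the precise characterization is instead that an ordering is optimal if and only if it differs from the nonincreasing one only by permutations within blocks of equal $\alpha_i$. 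Everything else reduces to the two‑line expansion and the one‑line interchange identity above.
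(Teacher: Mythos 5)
Your proof is correct and follows essentially the same route as the paper's: a pairwise-interchange (bubble-sort) argument, which you reduce via expansion of the square to the rearrangement inequality for $\sum_i\alpha_i\beta_i$. Your write-up is in fact the more careful one --- the caveat you raise about ties among the $\alpha_i$, where the ``only if'' direction fails as literally stated and must be weakened to optimality up to permutations within blocks of equal $\alpha_i$, is a genuine loose end that the paper's own proof also encounters (its swap argument likewise degenerates when $\alpha_1=\alpha_2$) but does not resolve.
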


\smallskip
\begin{proof}
Assume that both sequences are in nonincreasing order and that the distance can be
reduced by changing the order of the $\beta_i$. Consider the pairs 
$(\alpha_1,\beta_1)$ and $(\alpha_2,\beta_2)$. Then
$$
(\alpha_1-\beta_2)^2+(\alpha_2-\beta_1)^2\leq (\alpha_1-\beta_1)^2+(\alpha_2-\beta_2)^2 
$$
is equivalent to 
$$
\alpha_2(\beta_1-\beta_2)\geq \alpha_1(\beta_1-\beta_2).
$$
Assume $\beta_1>\beta_2$. Then $\alpha_2\geq\alpha_1$, which is a
contradiction unless $\alpha_1=\alpha_2$. If the $\beta_j$ are ordered
arbitrarily, then we can reorder these coefficients pairwise until they form a
nonincreasing sequence. Each pairwise swap reduces \eqref{distg}.
\end{proof}

In our application of Proposition \ref{theor1g}, we let 
$\alpha_1\geq\alpha_2\geq\dots\geq\alpha_n$ be the eigenvalues of the adjacency matrix 
$A\in{\mathbb R}^{n\times n}$. The graph associated with this matrix might not be 
bipartite. We would like the sequence of eigenvalues of the matrix 
$A_B\in{\mathbb R}^{n\times n}$, given by \eqref{adjbip}, to be close to the sequence 
$\alpha_1,\alpha_2,\dots,\alpha_n$ and appear in $\pm$ pairs. By Proposition
\ref{theor1g},
we know that the eigenvalues $\beta_1,\beta_2,\dots,\beta_n$ of $A_B$ should be in
nonincreasing order, and by Proposition~\ref{theor0} they vanish or appear in $\pm$ pairs.
We know from \eqref{spectrg} that at least $n_1-n_2$ eigenvalues of $A_B$ should be zero.

\smallskip
\begin{proposition}\label{theor2g}
Let $\{\alpha_j\}_{j=1}^n$, with $n=n_1+n_2$ and $n_1\geq n_2$, be a real nonincreasing 
sequence. Then the sequence $\{\beta_j\}_{j=1}^n$ with elements
\begin{equation}\label{betag}
\beta_j  = \begin{cases}
\frac{1}{2}(\alpha_j-\alpha_{n-j+1}), & j = 1,2,\dots,n_2, \\
0, & j = n_2+1,\dots,n_1, \\
-\beta_{n-j+1}, & j = n_1+1,\dots,n, \\
\end{cases}
\end{equation}
is the closest sequence to $\{\alpha_j\}_{j=1}^n$ in the least squares sense consisting of
at least $n_1-n_2$ zeros and nonvanishing entries appearing in $\pm$ pairs.
\end{proposition}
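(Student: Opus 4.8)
The plan is to reduce the constrained minimization to an unconstrained one over the "free" parameters and then optimize term by term. First I would observe that any admissible sequence $\{\beta_j\}$ has the form dictated by Proposition~\ref{theor0}: it is nonincreasing (here I invoke Proposition~\ref{theor1g}, which tells us that for fixed multiset of values the ordering that minimizes \eqref{distg} against the nonincreasing $\{\alpha_j\}$ is the nonincreasing one), it has at least $n_1-n_2$ zeros, and its nonzero entries come in pairs $\pm\lambda$. Because the $\alpha_j$ are nonincreasing and the admissible sequence is nonincreasing, the zeros must occupy a contiguous block, and the most natural candidate places them in positions $n_2+1,\dots,n_1$; I would argue that any other placement of the mandatory zeros can only increase the objective (moving a zero toward the "middle" of the spectrum, where $\alpha_j$ is smallest in absolute value, is optimal), so it suffices to optimize over sequences of exactly the prescribed shape, parametrized by $\lambda_1\geq\cdots\geq\lambda_{n_2}\geq 0$ with $\beta_j=\lambda_j$ for $j\leq n_2$ and $\beta_{n-j+1}=-\lambda_j$.

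With this reduction, the objective splits as
\begin{equation*}
\sum_{j=1}^n(\alpha_j-\beta_j)^2
= \sum_{j=1}^{n_2}\Bigl[(\alpha_j-\lambda_j)^2+(\alpha_{n-j+1}+\lambda_j)^2\Bigr]
+ \sum_{j=n_2+1}^{n_1}\alpha_j^2,
\end{equation*}
and the last sum is a constant independent of the $\lambda_j$. Each bracketed term depends on a single $\lambda_j$, so I would minimize each one separately: differentiating $(\alpha_j-\lambda)^2+(\alpha_{n-j+1}+\lambda)^2$ in $\lambda$ and setting the derivative to zero gives $\lambda=\tfrac12(\alpha_j-\alpha_{n-j+1})$, which is exactly \eqref{betag}. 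Since each term is a strictly convex quadratic in $\lambda_j$, this critical point is the unique minimizer.

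The one thing that still needs checking is the monotonicity constraint $\lambda_1\geq\lambda_2\geq\cdots\geq\lambda_{n_2}\geq 0$: the unconstrained per-term optima must actually satisfy it, otherwise the separable minimization would be invalid. I would verify that $\tfrac12(\alpha_j-\alpha_{n-j+1})$ is nonincreasing in $j$ on $\{1,\dots,n_2\}$ — this follows because $\alpha_j$ is nonincreasing while $\alpha_{n-j+1}$ is \emph{nondecreasing} in $j$ (as $n-j+1$ decreases), so their difference is nonincreasing — and that it is nonnegative, which holds because $j\leq n_2\leq n-j+1$ forces $\alpha_j\geq\alpha_{n-j+1}$. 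Hence the candidate \eqref{betag} lies in the feasible set and attains the term-by-term minimum, so it is globally optimal; uniqueness holds up to the usual caveat that ties among the $\alpha_j$ may allow alternative optimal orderings, exactly as in Proposition~\ref{theor1g}. The main obstacle is thus not the optimization itself but the bookkeeping argument that the mandatory zeros may be assumed to sit in the central block; I would handle it by an exchange argument comparing $\alpha_j^2$ across candidate zero positions.
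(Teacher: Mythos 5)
Your proposal is correct and follows essentially the same route as the paper: both reduce the problem to the separable per-index minimizations $\min_\beta\{(\alpha_j-\beta)^2+(\alpha_{n-j+1}+\beta)^2\}$ for $j\le n_2$ and $\min_\beta\beta^2$ for the central indices, and solve each convex quadratic exactly; you are in fact more explicit than the paper about why this reduction captures all admissible sequences and about checking that the per-term optima satisfy the monotonicity and nonnegativity constraints (the paper verifies the former by computing $\beta_j-\beta_{j+1}$ directly and leaves the reduction itself largely implicit). One correction: the step you single out as the ``main obstacle'' --- justifying that the mandatory zeros occupy positions $n_2+1,\dots,n_1$ --- needs no exchange argument, and the heuristic you offer for it (that the middle of the spectrum is where $\alpha_j$ is smallest in absolute value) is false in general, e.g.\ when all $\alpha_j$ are positive. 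The placement is in fact forced: once the sequence is nonincreasing (Proposition~\ref{theor1g}) and its nonzero entries come in $\pm$ pairs, say $p$ of them with $2p\le 2n_2$, the positive entries must occupy positions $1,\dots,p$, the negative ones positions $n-p+1,\dots,n$, and the zeros fill the central block of positions $p+1,\dots,n-p$, which contains $n_2+1,\dots,n_1$; allowing $\lambda_j=0$ in your parametrization $\lambda_1\ge\cdots\ge\lambda_{n_2}\ge 0$ then covers every admissible $p$, so the separable optimization is over the full feasible set.
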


\smallskip
\begin{proof}
The sequence $\{\beta_j\}_{j=1}^{n}$ consists of $n_1-n_2$ zero values and $n_2$ $\pm$ pairs. Indeed, we have
$$
\beta_j-\beta_{j+1} =
\begin{cases}
\frac{1}{2}(\alpha_j-\alpha_{j+1})
	+\frac{1}{2}(\alpha_{n-j}-\alpha_{n-j+1}), 
	\quad & 1\leq j\leq n_2-1, \\
\frac{1}{2}(\alpha_{n_2} - \alpha_{n_1+1}), & j=n_2, \\
0, & n_2+1\leq j\leq n_1-1, \\
\beta_{n_2}, & j=n_1, \\
\beta_{n-j} - \beta_{n-j+1},
	\quad & n_1+1\leq j\leq n-1,
\end{cases}
$$
and it follows that the sequence is nonincreasing. It remains to establish that the 
$\beta_j$ defined by \eqref{betag} are the best possible. Consider the minimization 
problems
\begin{equation}
\begin{cases}\label{ming}
\min_{\beta} \Bigl((\alpha_j-\beta)^2+(\alpha_{n-j+1} +\beta)^2\Bigr), \quad 
&1\leq j \leq n_2,\\
\min_{\beta}\,(\beta^2), \quad 
&n_2+1\leq j \leq n_1.
\end{cases}
\end{equation}
The solution sequence
$\{\beta_j\}_{j=1}^n$ is given by \eqref{betag}. Thus, the $\beta_j$ form a nonincreasing sequence 
consisting of $n_1-n_2$ zero values and $n_2$ $\pm$ pairs. It is the closest such sequence to the sequence 
$\{\alpha_j\}_{j=1}^{n}$ in the sense that it solves the minimization problems 
\eqref{ming}.
\end{proof}
\smallskip

We would like to determine an approximation of the matrix $A$ by a matrix of the form 
\eqref{adjbip}, where we allow row and column permutations of the latter
matrix. Define the spectral factorization 
\[
A_B=W_B\Lambda_B W_B^T,\quad 
\Lambda_B=\diag(\lambda_1^{(B)},\lambda_2^{(B)},\ldots,\lambda_n^{(B)}),
\]
where $W_B$ is an orthogonal matrix and the eigenvalues are ordered according to
$$
\lambda_1^{(B)} \geq \lambda_2^{(B)} \geq \cdots \geq \lambda_n^{(B)}.
$$
We remark that only the first $n_1$ eigenvalues are ordered as in \eqref{spefac}.

Let us initially assume that the nonzero eigenvalues are distinct.
If the eigenvectors are made unique, e.g., by making their first component
positive, a comparison with~\eqref{spectrg} shows that
\begin{equation}\label{eigstructg}
W_B = \begin{bmatrix}
U_1 & U_2 & U_1Z \\
V & O & -VZ
\end{bmatrix}, \qquad
\Lambda_B = \begin{bmatrix}
D & O & O \\
O & O_{n_1-n_2} & O \\
O & O & -ZDZ
\end{bmatrix},
\end{equation}
where $Z$ is the \emph{flip} matrix
$$
Z = \begin{bmatrix} O & & 1 \\ & \iddots \\ 1 & & O
\end{bmatrix}\in\R^{n_2\times n_2}.
$$

In the presence of multiple nonzero eigenvalues, the corresponding
eigenvectors are not uniquely determined, so the spectral factorization
\eqref{eigstructg} is only one of several possible distinct factorizations.

Let 
\begin{equation}\label{Aspecg}
A=W\Lambda W^T,\quad \Lambda=\diag(\lambda_1,\lambda_2,\ldots,\lambda_n),
\end{equation}
be a spectral factorization of $A$ with an orthogonal eigenvector matrix $W$ and the 
eigenvalues ordered according to 
\begin{equation}\label{Alambdag}
\lambda_1\geq \lambda_2\geq\cdots\geq\lambda_n.
\end{equation}
Partition the eigenvector matrix $W$ conformally with the eigenvector matrix $W_B$ of 
$A_B$, i.e., 
$$
W = \begin{bmatrix}
W_{11} & W_{12} & W_{13}\\
W_{21} & W_{22} & W_{23}
\end{bmatrix}.
$$

We would like to to approximate the eigenvector matrix $W$ of $A$ by the eigenvector 
matrix $W_B$ of $A_B$. This suggests that we solve the minimization problem
\begin{equation}\label{minW}
\min_{\substack{U_1^TU_1=V^TV=\frac{1}{2}I_{n_2}\\
U_2^TU_2=\frac{1}{2}I_{n_1-n_2}}} \norm{\begin{bmatrix}
U_1 & U_2 & U_1 \\
V & O & -V
\end{bmatrix}-\begin{bmatrix}
W_{11} & W_{12} & W_{13}Z \\
W_{21} & W_{22} & W_{23}Z
\end{bmatrix}}_F,
\end{equation}
where $\norm{\cdot}_F$ denotes the Frobenius norm. This problem splits into the
three independent problems
\begin{align}
\label{prb1g}
\min_{U_1^TU_1=\frac{1}{2}I_{n_2}} & \{\norm{U_1-W_{11}}_F^2
	+\norm{U_1-W_{13}Z}_F^2\}, \\
\label{prb2g}
\min_{V^TV=\frac{1}{2}I_{n_2}} & \{\norm{V-W_{21}}_F^2+\norm{V+W_{23}Z}_F^2\}, \\
\label{prb3g}
\min_{U_2^TU_2=\frac{1}{2}I_{n_1-n_2}} & \{\norm{U_2-W_{12}}_F^2\}.
\end{align}

Problem \eqref{prb1g} can be written as 
\begin{equation}\label{min2g}
\min_{X_1^TX_1=I_{n_2}}\left\{\norm{X_1-\sqrt{2}W_{11}}_F^2
	+\norm{X_1-\sqrt{2}W_{13}Z}_F^2\right\}.
\end{equation}
The following result shows how we can easily solve this problem.

\begin{proposition}\label{prp1g}
The solution of problem \eqref{min2g} can be determined by computing the singular value
decomposition of $W_{11}+W_{13}Z$ and setting all singular values to one.  
\end{proposition}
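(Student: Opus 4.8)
The plan is to reduce \eqref{min2g} to a trace-maximization problem of Procrustes type and then read off the answer from a singular value decomposition. First I would expand each Frobenius norm via $\norm{B}_F^2=\tr(B^TB)$ and use the constraint $X_1^TX_1=I_{n_2}$, which forces $\norm{X_1}_F^2=\tr(X_1^TX_1)=n_2$ to be a constant. Discarding all the terms that do not involve $X_1$, one finds that minimizing the objective in \eqref{min2g} is equivalent to maximizing
\[
\tr\!\bigl(X_1^T(W_{11}+W_{13}Z)\bigr)
\]
over all $X_1\in\R^{n_1\times n_2}$ with orthonormal columns. Set $M=W_{11}+W_{13}Z$ and let $M=P\Sigma R^T$ be a thin singular value decomposition, with $P\in\R^{n_1\times n_2}$ having orthonormal columns, $R\in\R^{n_2\times n_2}$ orthogonal, and $\Sigma=\diag(\sigma_1,\dots,\sigma_{n_2})$, $\sigma_i\ge0$.

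Next I would substitute this decomposition and use the cyclic invariance of the trace to write $\tr(X_1^TM)=\tr(\Sigma R^TX_1^TP)$. Put $N=R^TX_1^TP\in\R^{n_2\times n_2}$. Since $X_1$ and $P$ both have orthonormal columns, $X_1^TP$ is a submatrix of an orthogonal matrix and therefore has spectral norm at most one; hence $R^TX_1^TP$ does too, and in particular $|N_{ii}|\le1$ for every $i$. Consequently
\[
\tr(X_1^TM)=\sum_{i=1}^{n_2}\sigma_iN_{ii}\le\sum_{i=1}^{n_2}\sigma_i,
\]
and this bound is attained by the choice $X_1=PR^T$: indeed $X_1^TP=RP^TP=R$, so $N=R^TR=I_{n_2}$, and one checks directly that $X_1^TX_1=RP^TPR^T=I_{n_2}$, so $X_1=PR^T$ is feasible. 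Because $PR^T=P\,I_{n_2}\,R^T$, this minimizer is exactly the matrix obtained from the SVD of $W_{11}+W_{13}Z$ by replacing every singular value with one, which is the assertion of the proposition.

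The only delicate point is the non-uniqueness that appears when $M$ is rank deficient: if some $\sigma_i$ vanish, the associated columns of $P$ — equivalently, of the minimizer $X_1$ — are not determined by $M$, and one should note that any orthonormal completion still achieves the optimal value $\sum_i\sigma_i$, so that "setting all singular values to one" produces a genuine, though possibly non-unique, solution. Apart from this observation, the argument is routine linear algebra.
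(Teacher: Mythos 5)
Your proof is correct and follows essentially the same route as the paper: both reduce \eqref{min2g}, via the constancy of $\tr(X_1^TX_1)$ under the constraint, to maximizing $\tr\bigl(X_1^T(W_{11}+W_{13}Z)\bigr)$ and then read the solution $X_1=PR^T$ off the singular value decomposition. The only difference is that where the paper cites \cite[Theorem 4.1]{H89} for the final Procrustes step, you prove it inline with the standard bound $\tr(\Sigma N)\le\sum_i\sigma_i$, and your remark on non-uniqueness in the rank-deficient case is a correct addition.
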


\begin{proof}
Consider the problem
$$
\min_{X^TX=I} \|X-W\|_F^2.
$$
It can be written as
$$
\min_{X^TX=I} \{\tr(X^TX) - 2 \tr(X^TW) + \tr(W^TW)\}.
$$
The first and last terms are independent of $X$. Therefore we obtain the equivalent 
linear minimization problem
$$
\min_{X^TX=I} \{-\tr(X^TW)\}.
$$

Similarly, the linear problem associated to the minimization problem \eqref{min2g} is 
given by
\begin{equation}\label{minprobg}
\min_{X_1^TX_1=I_{n_2}} \{-\tr(X_1^T(W_{11}+W_{13}Z))\}.
\end{equation}
Hence, the problem \eqref{min2g} is equivalent to determining the closest orthogonal matrix
in the Frobenius norm to the matrix $W_{11}+W_{13}Z$. The solution is given by
computing the singular value decomposition $P\Sigma Q^T$ of $W_{11}+W_{13}Z$
and setting $X_1=PQ^T$; see 
\cite[Theorem 4.1]{H89} for a proof of the latter statement.
\end{proof}
\smallskip

The minimization problems \eqref{prb2g} and \eqref{prb3g} are solved similarly.
This gives the eigenvector matrix in the spectral factorization \eqref{spectrg}.

\begin{remark}\rm
We note that if $P\Sigma Q^T$ denotes the singular value decomposition of
$W_{11}+W_{13}Z$, then we can express its polar decomposition by
\[
W_{11}+W_{13}Z = (PQ^T)(Q\Sigma Q^T).
\]
Since the first factor $PQ^T$ is the minimizer of \eqref{minprobg}, the
deviation of $Q\Sigma Q^T$ from the identity matrix measures the quality of the
approximation.
\end{remark}

\begin{remark}\rm
If some of the nonzero eigenvalues of $A$ in \eqref{Aspecg} are
multiple, the corresponding columns of $W_{11}$, $W_{21}$, $W_{13}$,
and $W_{23}$, are not uniquely determined. Anyway, when approximating
$W_{11}+W_{13}Z$ by $X_1$, and $W_{21}-W_{23}Z$ by $Y$, those columns
contain linear combinations of the previous ones, and so they belong to the
same space. Then, the approximations $X_1$ and $Y$ will make factorization
\eqref{eigstructg} valid.
\end{remark}

\section{A spectral bipartization method}\label{sec2.5}

We give here an outline of a spectral bipartization method, 
based on the above results.
It exploits the spectral structure \eqref{spectrg} of a bipartite graph to
determine a node permutation that separates the two sets $\cV_1$ and $\cV_2$,
and to construct a bipartite approximation to a connected undirected graph
$\mathcal{G}$, having a perturbed bipartite structure.
The algorithm is exact whenever the input is the adjacency matrix of a
bipartite graph, however
it has to be considered ``heuristic'', as we were not able to prove
a complete convergence result for it, apart from the spectrum approximation
theorems in Section~\ref{sec2}.

There are three problems at hand:
estimating the cardinality of the sets $\mathcal{V}_1$ and $\mathcal{V}_2$, 
suitably ordering the nodes in $\mathcal{G}$, and, finally, 
approximating the adjacency matrix by a matrix of the form (\ref{adjbip}).
Let $A$ be the adjacency matrix of $\mathcal{G}$, and assume the spectral
factorization
\[
A=W\mathcal{D} W^T,\quad 
\mathcal{D}=\diag(\lambda_1,\lambda_2,\ldots,\lambda_n),
\]
is available, where $W$ is an orthogonal matrix and the eigenvalues are
ordered by increasing absolute value.

\begin{enumerate}
\item
The first step of our algorithm consists of finding the cardinality $n_1$ and
$n_2$ of the two disjoint node sets $\mathcal{V}_1$ and $\mathcal{V}_2$, unless
they are known in advance.
We do this by identifying the number of eigenvalues that are approximately
zero.

In principle, this could be done by detecting how many eigenvalues have
absolute value larger than a fixed tolerance, but this process is extremely
sensitive to the choice of the tolerance.
In our numerical experimentation, we found it to be more reliable to detect the largest
gap between ``small'' and ``large'' eigenvalues.

To do this, we compute the ratios
\begin{equation}\label{rhoi}
\rho_i = \frac{|\lambda_{i+1}|}{|\lambda_{i}|}, \quad i=1,2,\ldots,n-1.
\end{equation}
Then, for suitably chosen constants $R$ and $\tau$, we consider the index set
\begin{equation}\label{setJ}
\mathcal{J} = \left\{ i\in \{1,2,\ldots,n-1\}:\ \rho_i>R \text{ and }
|\lambda_{i+1}|>\tau \right\}.
\end{equation}
In our experiments, we set $R=10^2$ and $\tau=10^{-8}$.

An index $i$ is in $\mathcal{J}$ if there is a significant gap between
$\lambda_i$ and $\lambda_{i+1}$ ($\rho_i>R$), and $\lambda_{i+1}$ is
numerically nonzero ($|\lambda_{i+1}|>\tau$).
If the set $\mathcal{J}$ is empty, then we are not able to identify a partition of
the nodes, and we consider the cardinality of the sets $\mathcal{V}_1$ and
$\mathcal{V}_2$ to be the same.
On the contrary, we let $k$ be the index defined by
$$
\rho_k = \max_{i\in\mathcal{J}} \rho_i,
$$
and set
$$
n_2 = \left\lceil \frac{n-k}{2} \right\rceil, \quad n_1 = n-n_2,
$$
where $\left\lceil x\right\rceil$ denotes the closest integer to the real
number $x$.

The above approach is clearly not completely robust. It is easy to trick it by
constructing particular numerical examples, for example by letting $C$ in
\eqref{adjbip} have singular values that decay to zero exponentially, or by
introducing large gaps in the spectrum of the adjacency matrix.
Nevertheless, we found the procedure quite accurate on networks stemming from
real-world applications; see, e.g., Figures~\ref{yeeig} and~\ref{geeig}
in Section~\ref{sec5}.

In order to avoid overflow, it may be preferable to use the reciprocal
ratios $\rho_i^{-1}$. This is not required in our Matlab implementation, given
the features of the programming language.

\item
The subsequent step is to find the sets $\mathcal{V}_1$ and
$\mathcal{V}_2$, and reorder the nodes.
Assume that $\mathcal{G}$ is bipartite, but that the adjacency matrix 
$A$ corresponds to a random ordering of the nodes, so that
$$
A = \Pi A_B \Pi^T,
$$
for a permutation matrix $\Pi$ and a matrix $A_B$ of the form \eqref{adjbip}.
%Obviously, the structure \eqref{adjbip} is lost
In this case, the spectral factorization \eqref{spefac} becomes
$$
A = (\Pi Q) \mathcal{D} (\Pi Q)^T,
$$
i.e., the rows of the eigenvector matrix are permuted. In order to recover the structure 
of the eigenvectors, let us partition the eigenvector matrix as in
\begin{equation}\label{Wpart}
W := \Pi Q = \begin{bmatrix} W_1 & W_2 & W_3 \end{bmatrix},
\end{equation}
with $W_1,W_3\in\R^{n\times n_2}$ and $W_2\in\R^{n\times (n_1-n_2)}$.

Assume first that $n_1>n_2\geq 1$ and consider the matrix block $W_2$.
For \eqref{eigstructg} to be valid, the last $n_2$ 
rows of $W_2$ must vanish.  Sorting in descending order the 1-norms of its rows 
concentrates the smallest entries in the lower block of $W_2$. 
Applying the corresponding permutation $\sigma$ to the rows of $W$ brings this
matrix to the form \eqref{eigstructg} and the adjacency 
matrix to the form \eqref{adjbip}, with the block $C$ possibly permuted.
When $n_1=n_2$ the block $W_2$ is empty, so we consider
the matrix $W_1-W_3Z$. As its first $n_1$ rows should be exactly
zero, we sort the 1-norms of its rows in ascending order,
and apply the obtained permutation $\sigma$ to the rows of $W$.
After the reordering, the first $n_1$ nodes are in the set $\mathcal{V}_1$, and
the remaining $n_2$ are in the set $\mathcal{V}_2$.
We note that applying the permutation $\sigma$ to the rows and columns of
the initial adjacency matrix $A$ highlights the presence in the graph of an
approximate bipartite structure.

\item
To finally obtain an approximation of the matrix \eqref{adjbip} by the computed
spectral factorization, we first approximate the eigenvector matrix $W_B$ by
solving problem \eqref{minW}, and then approximate the 
eigenvalues in \eqref{Aspecg} by scalars that appear in $\pm$ 
pairs using Proposition \ref{theor2g}. Specifically, we let the $\alpha_j$ in the 
proposition be the eigenvalues \eqref{Alambdag}. The $\beta_j$ defined in the 
proposition are the eigenvalues of the matrix $D$ in \eqref{spectrg}, in the same order.

\end{enumerate}

\begin{algorithm}[!ht]
\begin{algorithmic}[1]
\REQUIRE adjacency matrix $A$ of size $n$, the user may optionally provide the
cardinalities $n_1$ and $n_2$ of $\cV_1$ and $\cV_2$
\ENSURE permutation $\sigma$ which reorders the nodes, adjacency matrix $A_B$
	of the approximated bipartite graph
\STATE compute the spectral factorization $A=W\mathcal{D}W^T$, with
	$\lambda_1\geq\cdots\geq\lambda_n$
\\ \COMMENT{\textbf{Step 1 of the algorithm}}
\IF{$n_1$, $n_2$ are not provided}
	\STATE sort the eigenvalues by increasing absolute value
	\STATE compute $\rho_i$, $i=1,\ldots,n-1$ by \eqref{rhoi}
	\STATE construct set $\mathcal{J}$ by \eqref{setJ}
	\IF{$\mathcal{J}=\emptyset$}
		\STATE $n_1=\left\lceil n/2\right\rceil$, $n_2=n-n_1$
	\ELSE
		\STATE $k = \arg\max_{i\in\mathcal{J}} \rho_i$
		\STATE $n_2 = \left\lceil (n-k)/2 \right\rceil$, $n_1 = n-n_2$
	\ENDIF
\ENDIF
\\ \COMMENT{\textbf{Step 2 of the algorithm}}
\STATE partition $W = \begin{bmatrix} W_1 & W_2 & W_3 \end{bmatrix}$ as in
	\eqref{Wpart}
\IF{$n_1>n_2$}
	\STATE find the permutation $\sigma$ which sorts the 1-norms of the
		rows of $W_2$ decreasingly
\ELSE
	\STATE find the permutation $\sigma$ which sorts the 1-norms of the
		rows of $W_1-W_3Z$ increasingly
\ENDIF
\STATE apply the permutation $\sigma$ to the rows of $W$
\\ \COMMENT{\textbf{Step 3 of the algorithm}}
\STATE approximate the eigenvectors of $A$ by minimizing \eqref{minW}
\STATE approximate the eigenvalues of $A$ by $\pm$ pairs $\beta_i$, by
	Proposition \ref{theor2g}
\STATE set $D=\diag(\beta_1,\ldots,\beta_n)$
\STATE construct the adjacency matrix $A_B=WDW^T$ of the bipartite graph
\end{algorithmic}
\caption{Spectral bipartization algorithm.}
\label{alg:specbip}
\end{algorithm}

The above procedure, outlined in Algorithm~\ref{alg:specbip}, determines 
the eigenvectors and eigenvalues of a 
matrix $A_B$ with the block structure
\begin{equation}\label{matrg}
A_B = \begin{bmatrix}
O & C\\
C^T & O
\end{bmatrix},
\end{equation}
where the matrix $C$ has real entries.
The matrix $A_B$ may have a different number of nonzero entries than $A$.
In fact, not all nonzero entries may be positive. We can handle this issue
in several ways:
\begin{itemize}
\item
Allow $A_B$ to be an adjacency matrix for a weighted graph with both positive and negative 
weights.
\item
Allow $A_B$ to be an adjacency matrix for a weighted graph with positive weights. We 
achieve this by replacing the matrix $C$ in \eqref{matrg} by the closest matrix, 
$C_+$, in the Frobenius norm with nonnegative entries. The matrix $C_+$ is
obtained from $C$ by setting all negative entries to zero.
\item
Require $A_B$ to represent an unweighted graph. The closest such matrix in the Frobenius 
norm to the matrix \eqref{matrg} is obtained by setting every entry of $C$ to the
closest member of the set $\{0,1\}$.
\end{itemize}
The last procedure is the one adopted in the numerical experiments presented
in Section \ref{sec5}. 

Algorithm~\ref{alg:specbip} can be applied only to small to medium sized
problems, i.e., when it is possible to compute a full spectral factorization of
$A$.
For larger problems, one may reduce the complexity of the computation by
renouncing the third step of the algorithm.
Indeed, when $n_1-n_2$ is not too large, a partial spectral factorization may
lead to constructing a basis for the null space of $A$, that is, to obtaining
the matrix $W_2$.
This would allow one to generate the permutation $\sigma$ that takes the
adjacency matrix to an almost bipartite form, identifying the two sets $\cV_1$
and $\cV_2$.

\section{Anti-communities}\label{sec3}

Let us consider a symmetric matrix $A$
of size $n=n_1+n_2$ with a zero leading square block of size $n_1$.
Then, $A$ may be considered the adjacency matrix of a network with an
anti-community of $n_1$ nodes.
The matrix has the form
\begin{equation}\label{antiadj}
A = \begin{bmatrix}
O_{n_1} & C\\
C^T & B
\end{bmatrix},
\end{equation}
with $C$ of size $n_1\times n_2$ and $B$ a square matrix of order $n_2$.
In the following, we denote by $\mathcal{N}(C)$ the null space of $C$, by
$\mathcal{R}(C)$ its range, and by $B_{|\mathcal{N}(C)}$ the restriction of the
submatrix $B$ to $\mathcal{N}(C)$.
\medskip

\begin{theorem}\label{t31}
Let $A$ be as in \eqref{antiadj} and let
$\bm{x}=\left[\begin{smallmatrix}\bm{x}_1 \\ \bm{x}_2 \end{smallmatrix}\right]$
be partitioned consistently with $A$.
Then the equation 
\begin{equation}\label{homogeq}
A\bm{x}=\bm{0}
\end{equation}
has $\nu=\dim{\mathcal{N}(C^T)}$ linearly independent solutions with
$\bm{x}_2=0$. Moreover, if
\[
d = \dim\left(\mathcal{R}(B_{|\mathcal{N}(C)})\cap\mathcal{R}(C^T)\right)\geq 1,
\]
then there are also $d$ linearly independent solutions to $A\bm{x}=\bm{0}$ with
$\bm{x}_2\neq 0$, so that $\dim{\mathcal{N}(A)}=d+\nu$.
\end{theorem}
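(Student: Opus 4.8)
My plan is to expand the homogeneous equation \eqref{homogeq} into its two block rows and read off the solution space directly. Writing $A\bm{x}=\bm{0}$ with $A$ as in \eqref{antiadj} gives the pair of conditions $C\bm{x}_2=\bm{0}$ and $C^T\bm{x}_1+B\bm{x}_2=\bm{0}$. The solutions with $\bm{x}_2=\bm{0}$ are immediate: the first condition is then vacuous and the second reduces to $C^T\bm{x}_1=\bm{0}$, so these solutions are precisely the vectors $\left[\begin{smallmatrix}\bm{x}_1\\\bm{0}\end{smallmatrix}\right]$ with $\bm{x}_1\in\mathcal{N}(C^T)$, a subspace of dimension $\nu$. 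This establishes the first assertion.

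Next I would describe all remaining solutions. The first block row forces $\bm{x}_2\in\mathcal{N}(C)$; given such an $\bm{x}_2$, the second block row is a consistent linear system for $\bm{x}_1$ exactly when $B\bm{x}_2\in\mathcal{R}(C^T)$, and its solution set is then a coset of $\mathcal{N}(C^T)$. Consequently the admissible second blocks form the subspace
\begin{equation*}
S=\bigl\{\bm{x}_2\in\mathcal{N}(C):\ B\bm{x}_2\in\mathcal{R}(C^T)\bigr\},
\end{equation*}
i.e., the preimage of $\mathcal{R}(C^T)$ under the restricted operator $B_{|\mathcal{N}(C)}$. The projection $\left[\begin{smallmatrix}\bm{x}_1\\\bm{x}_2\end{smallmatrix}\right]\mapsto\bm{x}_2$ sends $\mathcal{N}(A)$ onto $S$ with kernel $\mathcal{N}(C^T)\times\{\bm{0}\}$, so $\dim\mathcal{N}(A)=\dim S+\nu$ and the whole statement reduces to identifying $\dim S$.

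To compute $\dim S$ I would invoke the standard dimension formula for a preimage: for a linear map $\phi$ and a subspace $\mathcal{U}$ of its codomain, $\dim\phi^{-1}(\mathcal{U})=\dim\ker\phi+\dim\bigl(\mathcal{R}(\phi)\cap\mathcal{U}\bigr)$. With $\phi=B_{|\mathcal{N}(C)}$ and $\mathcal{U}=\mathcal{R}(C^T)$ this gives $\dim S=\dim\bigl(\mathcal{N}(B)\cap\mathcal{N}(C)\bigr)+d$. To produce the $d$ advertised solutions explicitly, take a basis $\bm{z}_1,\dots,\bm{z}_d$ of $\mathcal{R}(B_{|\mathcal{N}(C)})\cap\mathcal{R}(C^T)$, pick $\bm{u}_i\in\mathcal{N}(C)$ with $B\bm{u}_i=\bm{z}_i$ and $\bm{w}_i$ with $C^T\bm{w}_i=-\bm{z}_i$; then $\left[\begin{smallmatrix}\bm{w}_i\\\bm{u}_i\end{smallmatrix}\right]$ solves \eqref{homogeq}, has $\bm{u}_i\neq\bm{0}$ because $\bm{z}_i\neq\bm{0}$, and applying $B$ to any vanishing linear combination shows these vectors are independent and remain independent modulo the $\bm{x}_2=\bm{0}$ solutions.

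The step I expect to be the main obstacle is controlling the term $\dim\bigl(\mathcal{N}(B)\cap\mathcal{N}(C)\bigr)$: the clean conclusion $\dim\mathcal{N}(A)=d+\nu$ holds exactly when this intersection is trivial, that is, when $B$ is injective on $\mathcal{N}(C)$. So the real work is to argue that this non-degeneracy is in force here --- either as an implicit assumption on $A$ or, in the network setting, from structural properties of $B$ and $C$ --- and then to confirm that the $d$ constructed vectors together with a basis of $\mathcal{N}(C^T)\times\{\bm{0}\}$ actually span $\mathcal{N}(A)$, rather than merely sitting inside it as an independent set.
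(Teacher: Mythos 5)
Your argument is correct as far as it goes, and it takes a genuinely different route from the paper. The paper proves the theorem by a case analysis on $k=\ran(C)$ relative to $n_1$ and $n_2$ (full column rank, full row rank, nonsingular, rank deficient), starting in each case from the same block equations $C\bm{x}_2=\bm{0}$, $C^T\bm{x}_1=-B\bm{x}_2$ that you use; in the rank-deficient case it only records that a solution with $\bm{x}_2\neq\bm{0}$ exists if and only if $d\geq 1$, and never counts such solutions. Your single projection/preimage computation subsumes all four cases, explicitly exhibits $d$ independent solutions with $\bm{x}_2\neq\bm{0}$ that remain independent modulo the $\bm{x}_2=\bm{0}$ ones, and yields the exact identity $\dim\mathcal{N}(A)=\nu+d+\dim\bigl(\mathcal{N}(B)\cap\mathcal{N}(C)\bigr)$ --- strictly more than the paper's own proof establishes.

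The obstacle you flag in your last paragraph is genuine and cannot be argued away: the equality $\dim\mathcal{N}(A)=d+\nu$ really does require $\mathcal{N}(B)\cap\mathcal{N}(C)=\{\bm{0}\}$, and this follows neither from the form \eqref{antiadj} nor from connectedness of the graph. For a concrete counterexample, take the tree on six vertices with edges $\{1,2\}$, $\{2,3\}$, $\{2,4\}$, $\{4,5\}$, $\{4,6\}$ and $n_1=1$, $n_2=5$, so that $C=[1,0,0,0,0]$ and $B$ is the adjacency matrix of the subtree on $\{2,\ldots,6\}$. Here $\nu=0$ and $d=1$, but $\mathcal{N}(A)$ is two-dimensional, spanned by $(1,0,-1,0,0,0)^T$ and $(0,0,0,0,1,-1)^T$; the extra dimension is exactly $\dim\bigl(\mathcal{N}(B)\cap\mathcal{N}(C)\bigr)=1$, produced by the twin leaves $5$ and $6$, which have identical neighborhoods. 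So do not search for a hidden non-degeneracy argument: the conclusions that actually hold are $\dim\mathcal{N}(A)\geq d+\nu$ in general, with equality precisely when $B$ is injective on $\mathcal{N}(C)$, and your computation already delivers both. The only remaining polish would be to note that your $d$ constructed vectors together with a basis of $\mathcal{N}(C^T)\times\{\bm{0}\}$ span all of $\mathcal{N}(A)$ exactly in that non-degenerate case, which is immediate from your formula $\dim S=\dim\bigl(\mathcal{N}(B)\cap\mathcal{N}(C)\bigr)+d$.
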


\begin{proof}
Let $k=\ran(C)$ and consider the case $n_1>n_2=k$.
Let us search for vectors $\bm{x}$ such that $A\bm{x}=\bm{0}$.
Then we have
\begin{equation}\label{splitx}
A\begin{bmatrix}
\bm{x}_1\\
\bm{x}_2
\end{bmatrix} = \begin{bmatrix}
O_{n_1} & C\\
C^T & B
\end{bmatrix}\begin{bmatrix}
\bm{x}_1\\
\bm{x}_2
\end{bmatrix} = \begin{bmatrix}
C\bm{x}_2\\
C^T\bm{x}_1+B\bm{x}_2
\end{bmatrix}.
\end{equation}
Since $C$ is of full rank and $n_1>n_2$, it follows from $C\bm{x}_2=0$ that
$\bm{x}_2=0$ and, hence, $C^T\bm{x}_1=0$. The latter
implies that $\bm{x}_1$ is in the null space of $C^T$, which has dimension
$n_1-n_2$. Thus, the matrix $A$ admits the following linearly independent eigenvectors
corresponding to the eigenvalue $\lambda=0$,
$$
\bm{x}^{(i)} = \begin{bmatrix} \bm{u}_{n_2+i} \\ \bm{0} \end{bmatrix}, \quad
i=1,2,\ldots,n_1-n_2,
$$
where $\bm{u}_i$, $i=1,2,\ldots,n_1$, are the left singular vectors of $C$.
Hence, $\lambda=0$ has multiplicity $n_1-n_2=\dim{\mathcal{N}(C^T)}$.

Let us now assume that $k=n_1<n_2$. Then $A$ may or may not have zero
eigenvalues.
Indeed, for $A$ to have a vanishing eigenvalue, the vector
$\bm{x}_2\in\R^{n_2}$ that appears in \eqref{splitx} has to belong to the null
space of $C$, which has dimension $n_2-n_1$.
Then, there will be zero eigenvalues if and only if the system 
$$
C^T \bm{y} = -B\bm{x}_2
$$
has a solution.

If instead $k=n_1=n_2$, i.e., if $C$ is nonsingular,
then $\lambda=0$ implies that both $\bm{x}_1=0$ and $\bm{x}_2=0$.
Hence, $\bm{x} = \left[\begin{smallmatrix} \bm{x}_1 \\ \bm{x}_2
\end{smallmatrix}\right]=0$,
and all the eigenvalues of $A$ are different from zero.

We finally turn to the case when the submatrix $C$ is rank deficient, that is,
$k<\min\{n_1,n_2\}$.
The right-hand side of \eqref{splitx} is equivalent to 
\[
\bm{x}_2\in\mathcal{N}(C),\quad C^T\bm{x}_1=-B\bm{x}_2.
\]
Let $\bm{x}$ be a nontrivial solution of \eqref{homogeq}. When
$\bm{x}_2=\bm{0}$, there has to be a vector $\bm{x}_1\ne\bm{0}$ with
$C^T\bm{x}_1=\bm{0}$. 
Since in this case the null space of $C^T$ has dimension $n_1-k$, there are
$n_1-k$ linearly independent solutions of \eqref{homogeq} with
$\bm{x}_2=\bm{0}$.

The existence of a solution $\bm{x}$ of \eqref{homogeq} with a nonzero subvector
$\bm{x}_2$ is equivalent to
\[
\dim\left(\mathcal{R}(B_{|\mathcal{N}(C)})\cap\mathcal{R}(C^T)\right)\geq 1.
\]
This condition does not hold for most matrix pairs $(B,C)$. 
\end{proof}

\begin{remark}\rm
We note that if $B=0$, then the equation $A\bm{x}=\bm{0}$ has exactly
\[
\dim{\mathcal{N}(C)} + \dim{\mathcal{N}(C^T)} = n-2\ran(C)
\]
linearly independent solutions.
\end{remark}

Theorem \ref{t31} shows that if a network has a large anti-community
($n_1>n_2$), the spectral decomposition $A=WDW^T$ has the form
$$
W = \begin{bmatrix}
E & U_2 & F \\
G & O_{n_2,n_1- n_2} & H
\end{bmatrix}, \quad D = \begin{bmatrix}
D_1 & O & O\\
O & O_{n_1-n_2} & O\\
O & O & D_2
\end{bmatrix}.
$$
The structures of $W$ and $D$ are very similar to those of $W_B$ and $\Lambda_B$
in \eqref{eigstructg}, respectively.
For this reason, the bipartization algorithm described in Section~\ref{sec2.5},
is able to detect the presence of a large anti-community and to order the nodes
so that the adjacency matrix takes the form \eqref{antiadj}.
In case a group of nodes is only approximately an anti-community, the algorithm
produces an adjacency matrix that approximates \eqref{antiadj}.

To summarize, when $n_1>n_2$, if a network is either bipartite or contains a
large anti-community, its adjacency matrix has zero eigenvalues;
the converse is not true.
If $A$ has a multiple zero eigenvalue, then we can
recognize the presence of one of the two above cases by observing the structure
of the eigenvector matrix.
%We can also measure the distance from being bipartite or from having a large
%anti-community. 
%and determine a network that satisfies one of the two conditions.

\section{Computed examples}\label{sec5}

In the following numerical experiments, we fix the integers $n_1$ and $n_2$, and construct
a random matrix $A$ of the form \eqref{adjbip}, with a sparse block $C$ with density $\xi$.
The matrix is first perturbed, by replacing its (1,1) and (2,2) blocks by sparse matrices 
of appropriate size and density $\eta$, and then ``scrambled'', by applying the same random 
permutation to its rows and columns.

We apply the algorithm of Section~\ref{sec2} to the matrix $A$ either by
supplying the cardinality of the two sets $\mathcal{V}_1$ and $\mathcal{V}_2$
(this approach is referred to as \texttt{specbip}-$n$), or letting the method
estimate $n_1$ and $n_2$ from the data; we refer to the latter approach as
\texttt{specbip}.
Since the block (1,2) of the matrix returned by the method is generally
permuted with respect to the initial test matrix, the rows and columns are
reordered according to the original sequence of the nodes.
The final reordering allows us to compare the resulting matrix $A_B$ to the test
matrix $A$.

Our results are compared to the ones obtained by red-black ordering using the
MatlabBGL library \cite{MatBGL}, a Matlab package implementing
graph algorithms. A matrix has a red-black ordering if the corresponding graph
is bipartite. To find a bipartite ordering, this software uses a breadth first
search algorithm, starting from an arbitrary vertex. The partition of the nodes
is determined by forming a group containing all the vertices having even
distance from the root, and another group with the vertices at odd distance
from the root. This procedure is designed to bipartite networks, not to produce
an approximation when the bipartization is not exact.

\begin{figure}[hbt]
\begin{center}
\includegraphics[width=.32\textwidth]{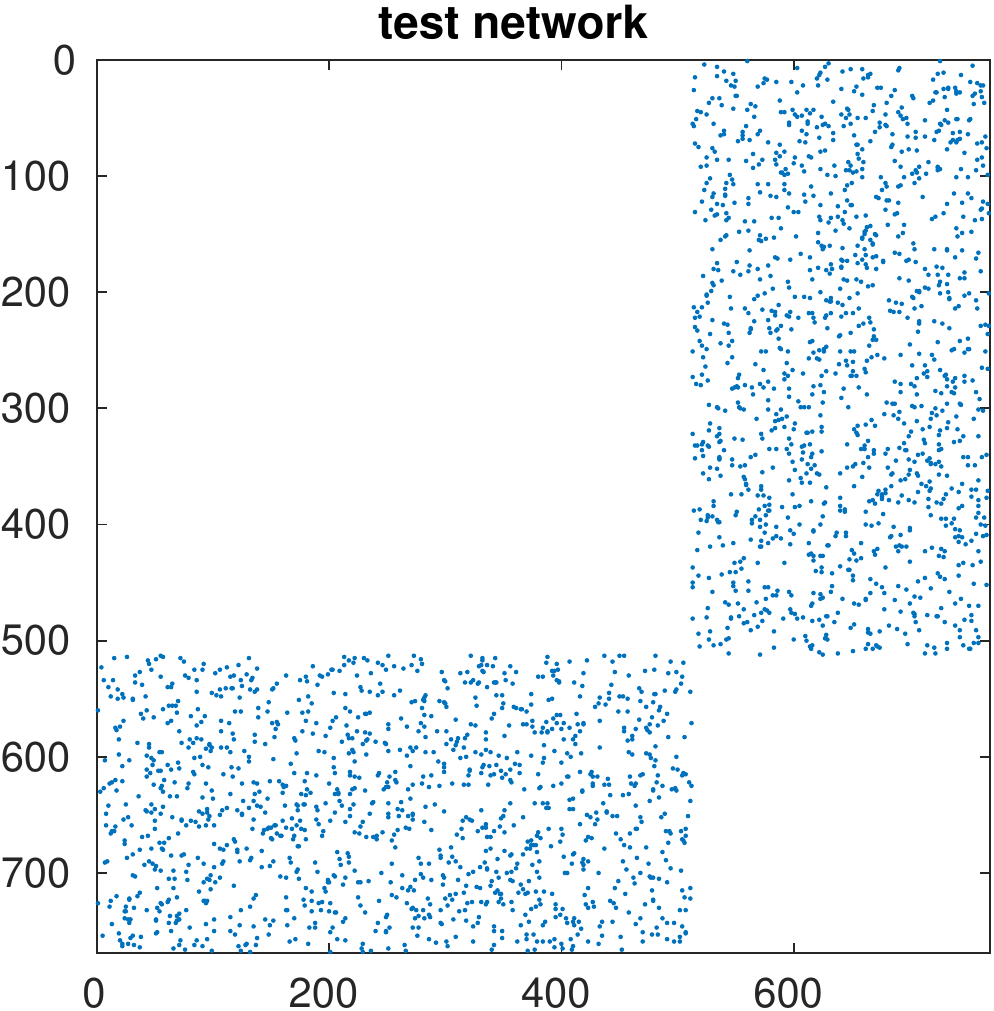}\hfill
\includegraphics[width=.32\textwidth]{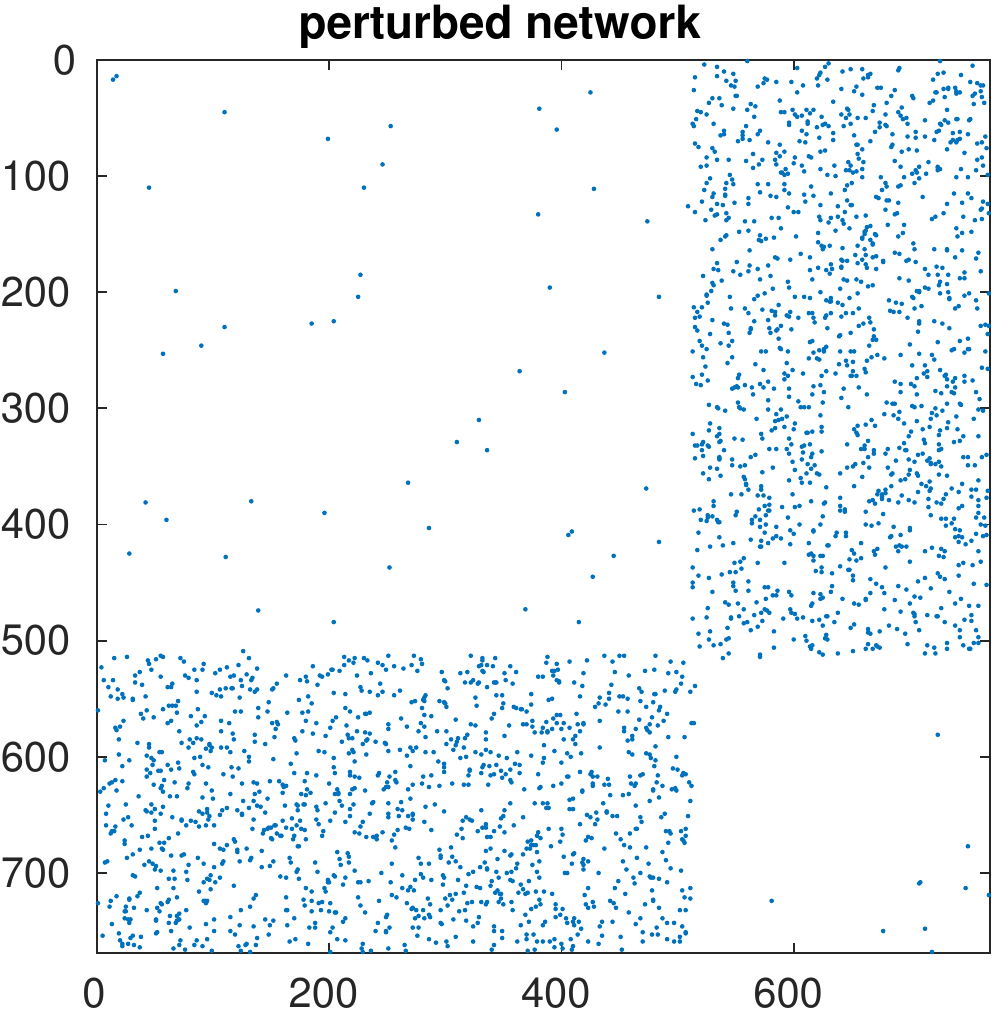}\hfill
\includegraphics[width=.32\textwidth]{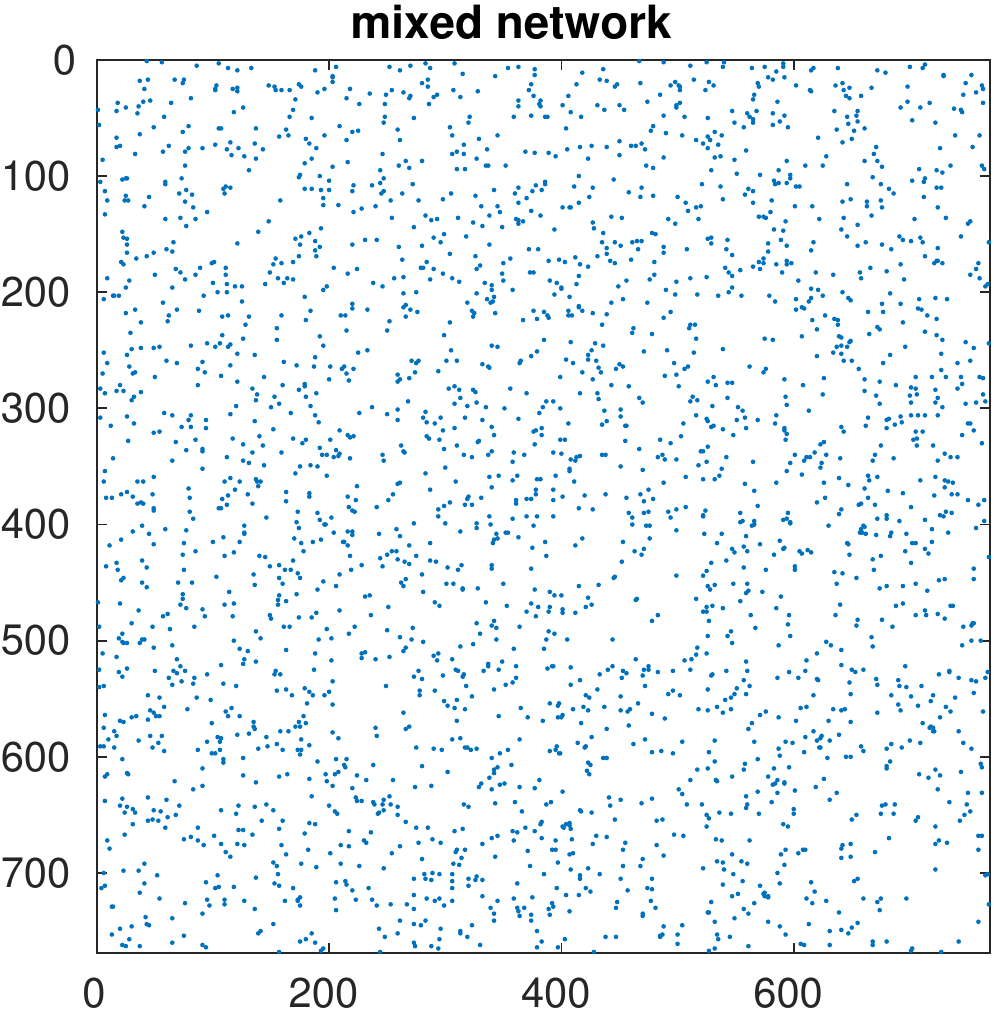}\hfill
\includegraphics[width=.32\textwidth]{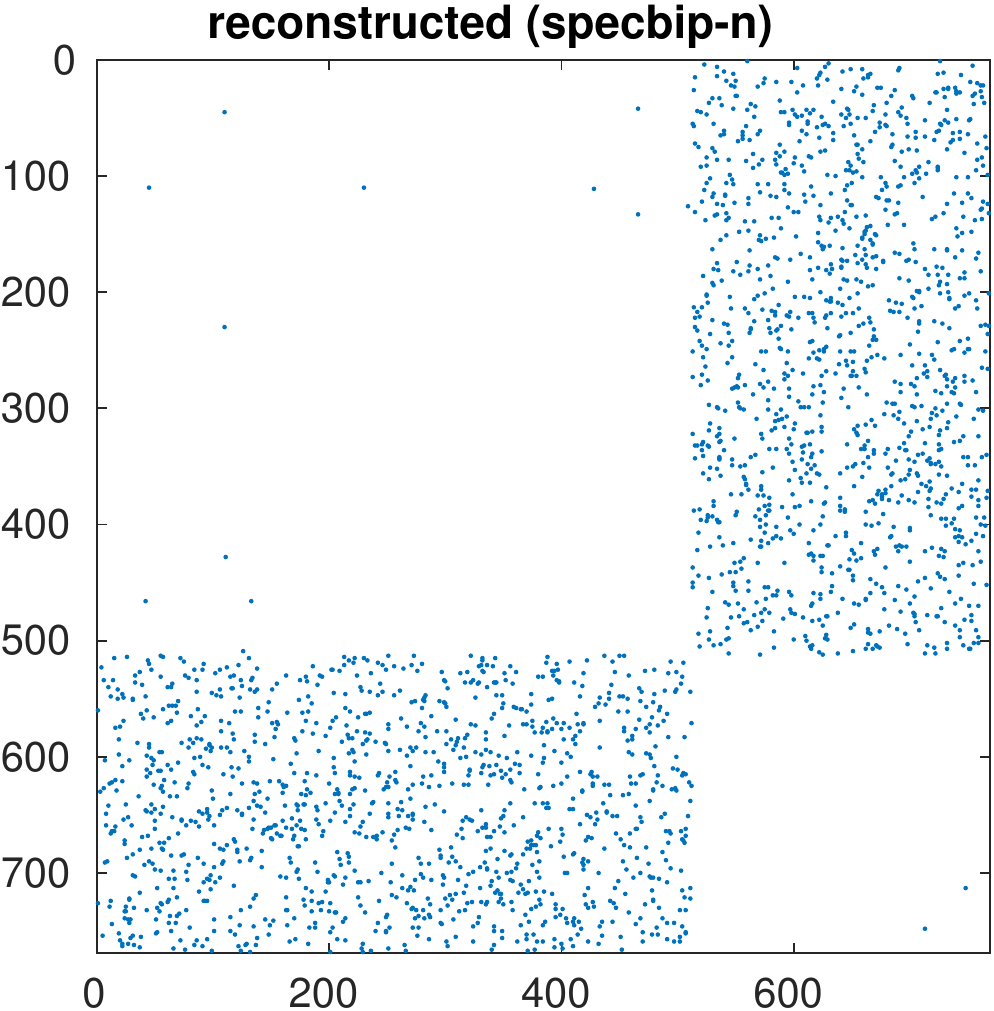}\hfill
\includegraphics[width=.32\textwidth]{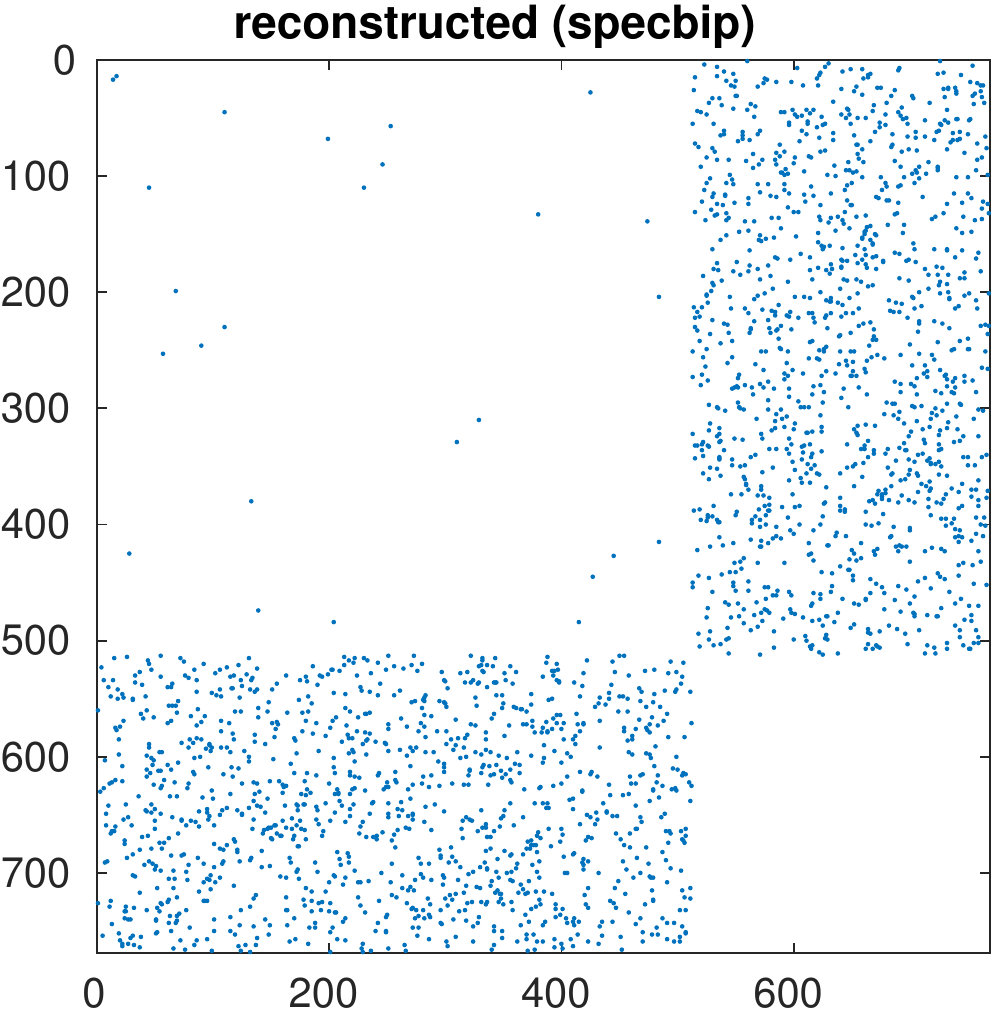}\hfill
\includegraphics[width=.32\textwidth]{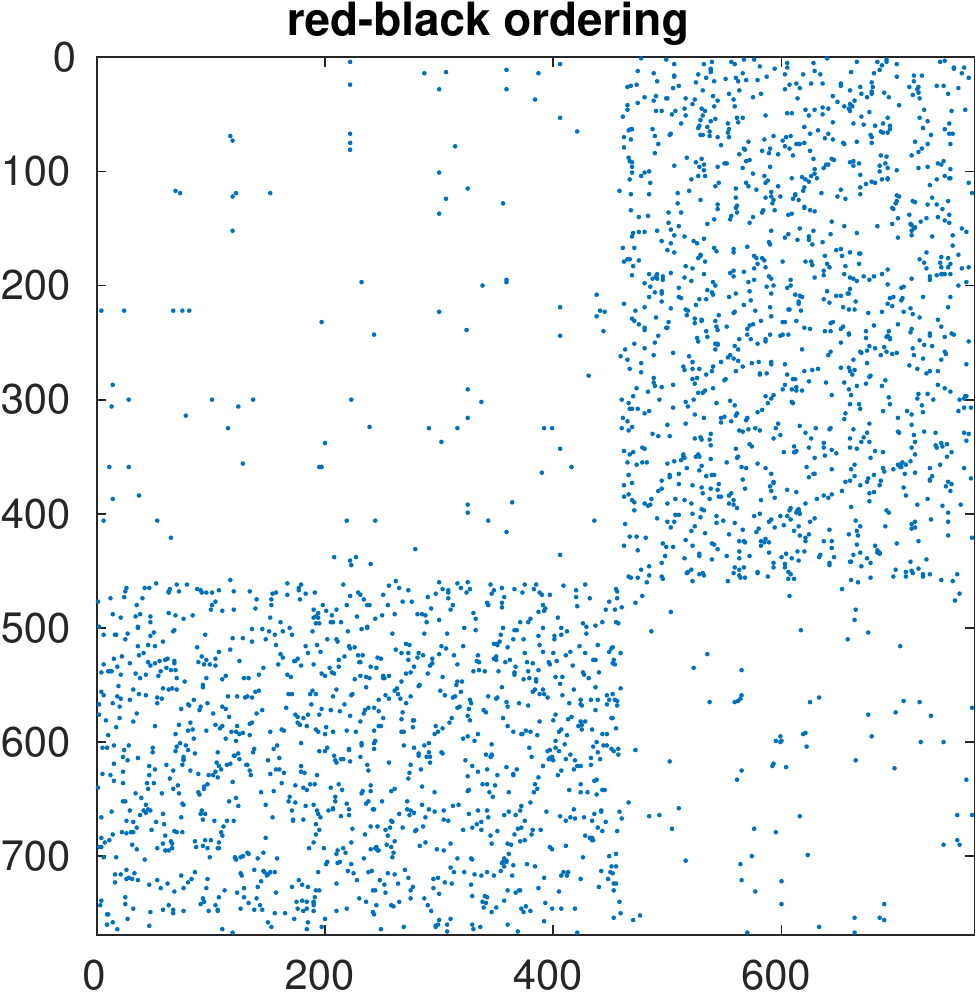}\hfill
\caption{$(n_1,n_2)=(512,256)$, $(\tilde{n}_1,\tilde{n}_2)=(492,276)$,
$\xi=10^{-2}$, $\eta=10^{-4}$.}
\label{ex1fig}
\end{center}
\end{figure}

\begin{figure}[hbt]
\begin{center}
\includegraphics[width=.6\textwidth]{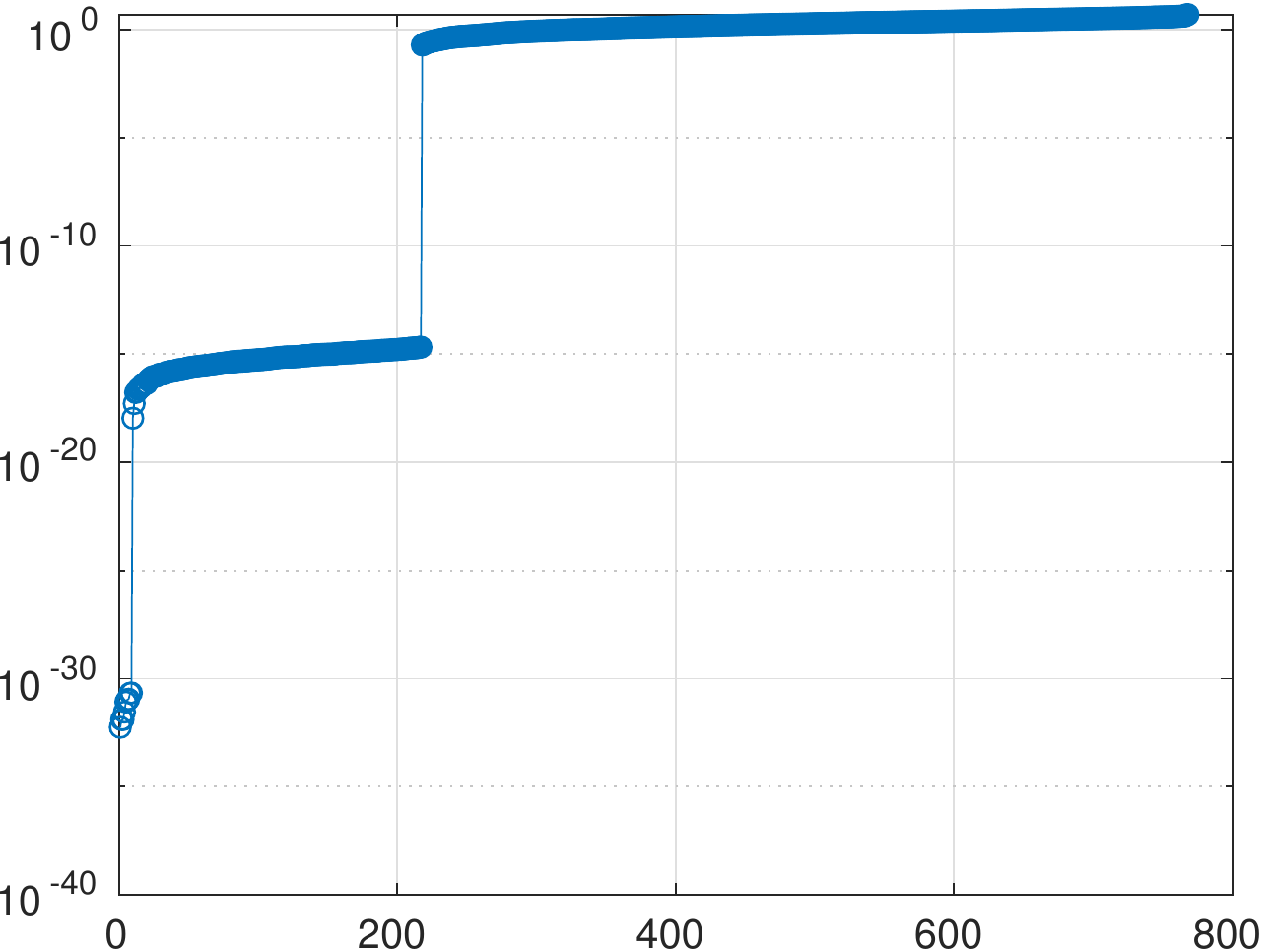}
\caption{$(n_1,n_2)=(512,256)$, $(\tilde{n}_1,\tilde{n}_2)=(492,276)$,
$\xi=10^{-2}$, $\eta=10^{-4}$.}
\label{ex1eig}
\end{center}
\end{figure}

Figure~\ref{ex1fig} displays the results for a test matrix with
$(n_1,n_2)=(512,256)$, sparsity $\xi=10^{-2}$, and perturbation
$\eta=10^{-4}$.
In particular, it reports in the upper row a spy plot of the original test
matrix, the perturbed version, with random arcs in the (1,1) and (2,2) blocks,
and the permuted matrix that is fed to the bipartization methods.
The bottom row shows the reconstructed networks.
The \texttt{specbip}-$n_1$ approach, which receives the information about the
cardinality of the node sets, produces the matrix closest to the original.
The general algorithm estimates the cardinalities
$(\tilde{n}_1,\tilde{n}_2)=(492,276)$, according to the number of ``small''
eigenvalues; see Figure~\ref{ex1eig}, where the absolute values
of the eigenvalues are displayed in nondecreasing order.
This algorithm produces a slightly less accurate approximation than the
previous one, which is anyway much better than the matrix produced by the
red/black ordering.

\begin{figure}[hbt]
\begin{center}
\includegraphics[width=.32\textwidth]{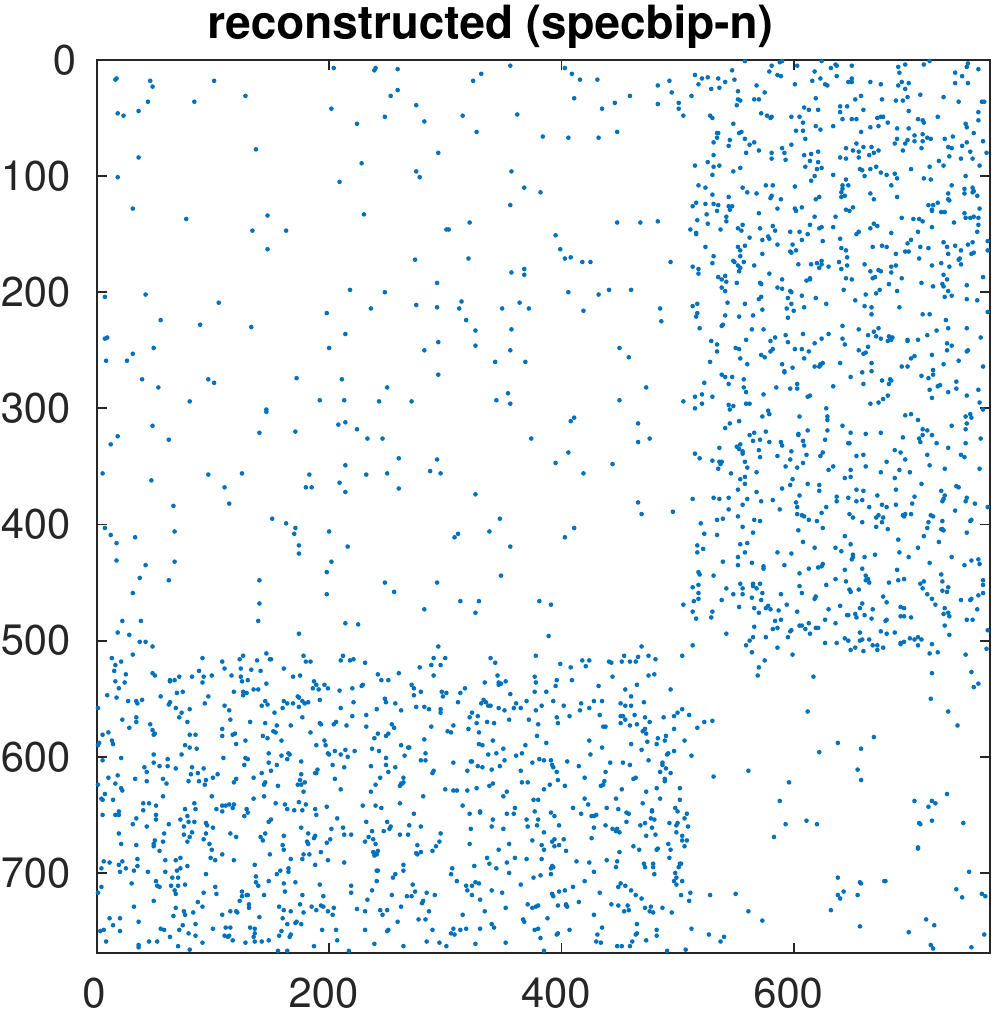}\hfill
\includegraphics[width=.32\textwidth]{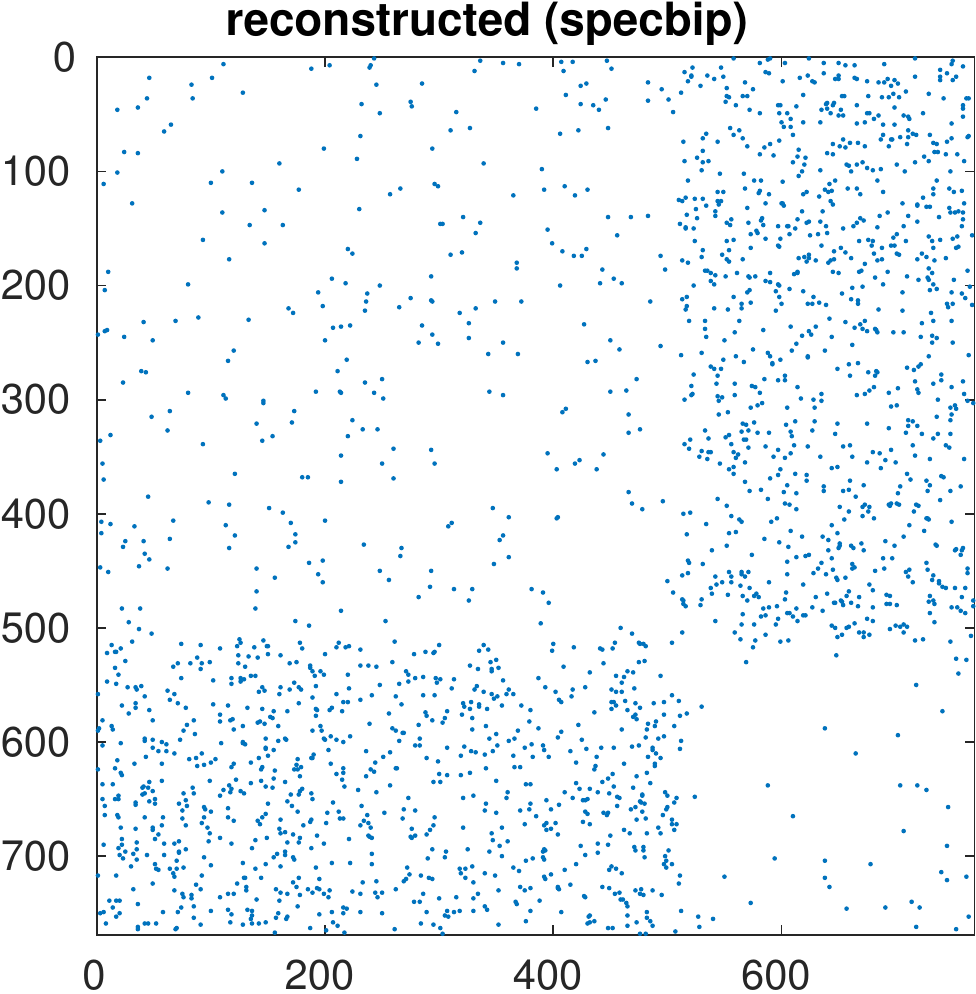}\hfill
\includegraphics[width=.32\textwidth]{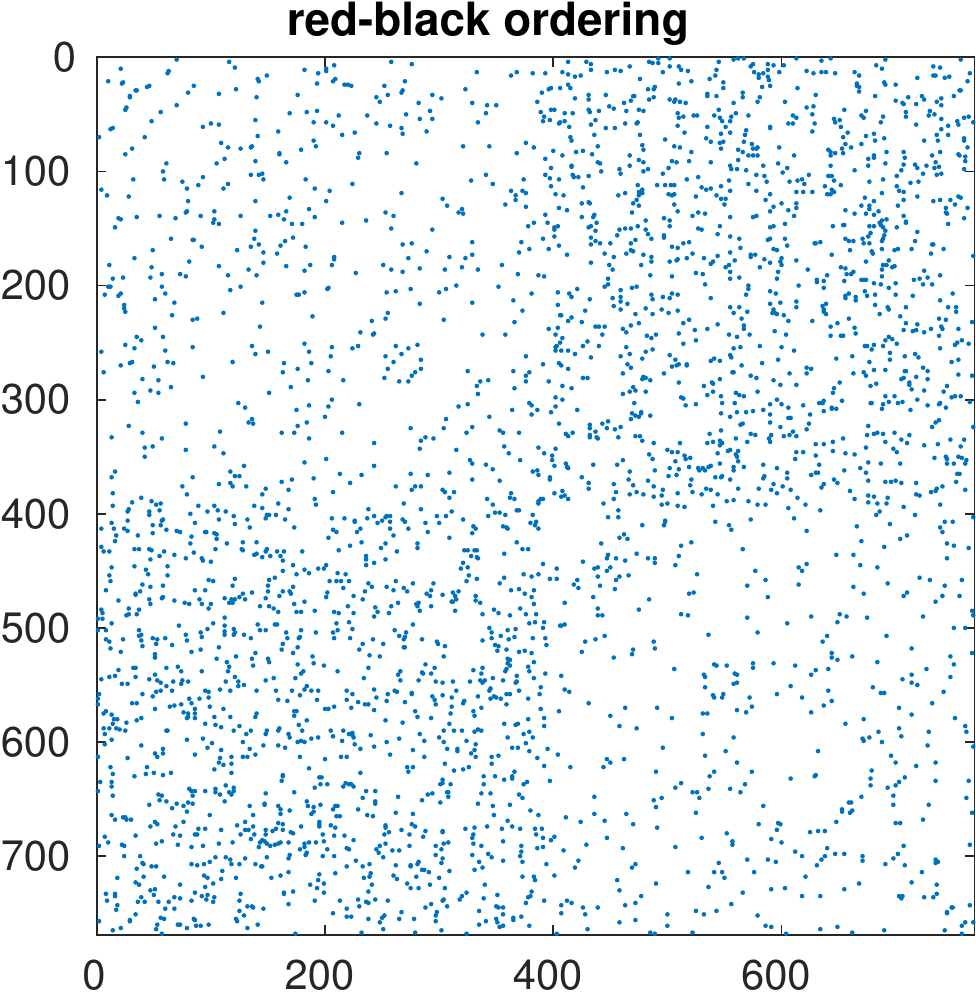}\hfill
\caption{$(n_1,n_2)=(512,256)$, $(\tilde{n}_1,\tilde{n}_2)=(396,372)$,
$\xi=10^{-2}$, $\eta=10^{-3}$.}
\label{ex2fig}
\end{center}
\end{figure}

Figure~\ref{ex2fig} shows the results for a test matrix similar to the previous
one, but with a larger perturbation $\eta=10^{-3}$.
The estimation of $(n_1,n_2)$ is inaccurate, but the approximation produced by
the \texttt{specbib} methods is quite close to the unperturbed matrix, while
the red/black ordering matrix is far from it.

Now, let 
$$
E = A-A_B = \begin{bmatrix} E_{11} & E_{12} \\ E_{21} & E_{22} \end{bmatrix},
$$
where $E_{11}$ and $E_{22}$ are square matrices of size $n_1$ and $n_2$,
respectively, and let $|M|$ denote the number of nonzero elements of
$M$.
To evaluate the quality of the results, we consider the following three indices
$$
\mathcal{I}_B = 1-b_s, \qquad
\mathcal{E}_B = \frac{|E_{11}|}{n_1^2} + \frac{|E_{22}|}{n_2^2}, \qquad
\mathcal{E}_A = \frac{|E_{12}|}{|C|}.
$$
The first two indices measure the distance of $A_B$ from the adjacency matrix
of a bipartite graph; see \eqref{bipind} for the definition of $b_s$.
The third index measures the approximation error with respect to the starting
bipartite network \eqref{adjbip}.
To better evaluate the error in the bipartition, we introduce the fourth
index $\mathcal{E}_{N}=\widetilde{\mathcal{E}}_{N}/n_1$, where
$\widetilde{\mathcal{E}}_{N}$ is the number of nodes from the set $\cV_1$ that
were incorrectly ascribed to the set $\cV_2$.

Tables \ref{tab24}, \ref{tab25}, and \ref{tab14} report the average
values of the above four quality indices over
10 realizations of the random test networks. Three different pairs $(n_1,n_2)$
are considered; each table refers to different densities $\xi$ and $\eta$; $T$ stands for the execution time in seconds.

\begin{table}[ht!]
\centering
\caption{Results for $\xi=10^{-2}$, $\eta=10^{-4}$.}
\begin{tabular}{cccc}
\hline
(256,128) & \texttt{specbip}-$n_1$ & \texttt{specbip} & red-black \\
\hline
$\mathcal{I}_B$ & 1.22e-16 & 1.89e-16 & 2.33e-03 \\
$\mathcal{E}_B$ & 5.46e-04 & 6.74e-04 & 3.72e-03 \\
$\mathcal{E}_A$ & 2.80e-01 & 2.79e-01 & - \\
$\mathcal{E}_N$ & 1.45e-01 & 1.58e-01 & 2.76e-01 \\
$T$ & 4.94e-02 & 5.05e-02 & 3.15e-04 \\
\hline
(512,256) & \texttt{specbip}-$n_1$ & \texttt{specbip} & red-black \\
\hline
$\mathcal{I}_B$ & 1.11e-17 & 1.11e-17 & 2.98e-03 \\
$\mathcal{E}_B$ & 1.13e-04 & 1.50e-04 & 3.39e-03 \\
$\mathcal{E}_A$ & 4.84e-02 & 6.27e-02 & - \\
$\mathcal{E}_N$ & 3.36e-02 & 5.96e-02 & 2.97e-01 \\
$T$ & 2.77e-01 & 2.95e-01 & 4.94e-04 \\
\hline
(1024,512) & \texttt{specbip}-$n_1$ & \texttt{specbip} & red-black \\
\hline
$\mathcal{I}_B$ & 7.77e-17 & 0.00e+00 & 4.17e-02 \\
$\mathcal{E}_B$ & 9.92e-05 & 2.11e-04 & 4.75e-03 \\
$\mathcal{E}_A$ & 1.06e-01 & 1.80e-01 & - \\
$\mathcal{E}_N$ & 3.62e-02 & 1.15e-01 & 2.75e-01 \\
$T$ & 1.92e+00 & 1.94e+00 & 8.67e-04 \\
\hline
\end{tabular}
\label{tab24}
\end{table}

\begin{table}[ht!]
\centering
\caption{Results for $\xi=10^{-2}$, $\eta=10^{-5}$.}
\begin{tabular}{cccc}
\hline
(256,128) & \texttt{specbip}-$n_1$ & \texttt{specbip} & red-black \\
\hline
$\mathcal{I}_B$ & 1.11e-17 & 7.77e-17 & 1.68e-06 \\
$\mathcal{E}_B$ & 6.68e-04 & 8.79e-04 & 3.36e-03 \\
$\mathcal{E}_A$ & 2.70e-01 & 2.68e-01 & - \\
$\mathcal{E}_N$ & 1.23e-01 & 1.49e-01 & 2.58e-01 \\
$T$ & 4.39e-02 & 4.70e-02 & 1.01e-03 \\
\hline
(512,256) & \texttt{specbip}-$n_1$ & \texttt{specbip} & red-black \\
\hline
$\mathcal{I}_B$ & 0.00e+00 & 2.22e-17 & 1.40e-04 \\
$\mathcal{E}_B$ & 3.05e-05 & 1.91e-05 & 8.81e-04 \\
$\mathcal{E}_A$ & 3.88e-02 & 2.38e-02 & - \\
$\mathcal{E}_N$ & 1.87e-02 & 1.93e-02 & 3.16e-01 \\
$T$ & 2.72e-01 & 2.77e-01 & 5.12e-04 \\
\hline
(1024,512) & \texttt{specbip}-$n_1$ & \texttt{specbip} & red-black \\
\hline
$\mathcal{I}_B$ & 0.00e+00 & 0.00e+00 & 4.04e-03 \\
$\mathcal{E}_B$ & 1.91e-07 & 1.03e-05 & 1.07e-03 \\
$\mathcal{E}_A$ & 1.73e-04 & 9.49e-03 & - \\
$\mathcal{E}_N$ & 9.77e-05 & 9.47e-03 & 3.25e-01 \\
$T$ & 1.91e+00 & 1.89e+00 & 9.52e-04 \\
\hline
\end{tabular}
\label{tab25}
\end{table}

\begin{table}[ht!]
\centering
\caption{Results for $\xi=10^{-1}$, $\eta=10^{-4}$}
\begin{tabular}{cccc}
\hline
(256,128) & \texttt{specbip}-$n_1$ & \texttt{specbip} & red-black \\
\hline
$\mathcal{I}_B$ & 0.00e+00 & 0.00e+00 & 7.71e-02 \\
$\mathcal{E}_B$ & 0.00e+00 & 5.83e-04 & 1.35e-02 \\
$\mathcal{E}_A$ & 2.43e-02 & 4.24e-02 & - \\
$\mathcal{E}_N$ & 0.00e+00 & 2.58e-02 & 3.18e-01 \\
$T$ & 5.56e-02 & 6.05e-02 & 3.07e-03 \\
\hline
(512,256) & \texttt{specbip}-$n_1$ & \texttt{specbip} & red-black \\
\hline
$\mathcal{I}_B$ & 0.00e+00 & 0.00e+00 & 1.44e-01 \\
$\mathcal{E}_B$ & 0.00e+00 & 8.19e-04 & 8.01e-03 \\
$\mathcal{E}_A$ & 8.02e-03 & 5.19e-02 & - \\
$\mathcal{E}_N$ & 0.00e+00 & 4.47e-02 & 3.31e-01 \\
$T$ & 2.77e-01 & 2.76e-01 & 1.08e-03 \\
\hline
(1024,512) & \texttt{specbip}-$n_1$ & \texttt{specbip} & red-black \\
\hline
$\mathcal{I}_B$ & 0.00e+00 & 0.00e+00 & 2.60e-01 \\
$\mathcal{E}_B$ & 0.00e+00 & 1.04e-03 & 6.54e-03 \\
$\mathcal{E}_A$ & 2.33e-03 & 9.04e-02 & - \\
$\mathcal{E}_N$ & 0.00e+00 & 8.71e-02 & 3.28e-01 \\
$T$ & 2.02e+00 & 2.07e+00 & 3.99e-03 \\
\hline
\end{tabular}
\label{tab14}
\end{table}

A comparison of the tables shows that the spectral bipartization algorithm is
always more accurate than the red-black ordering method.
At the same time, it is much slower than the MatlabBGL function, as in our
experiments we compute the whole spectrum of the adjacency matrix, without
exploiting its sparsity.
To be competitive with existing methods for large-scale problems, the spectral
method should be modified in order to perform its task by suitable iterative
methods, in order to take advantage of the structure of the adjacency matrix.

\begin{figure}[htb]
\begin{center}
\includegraphics[width=.48\textwidth]{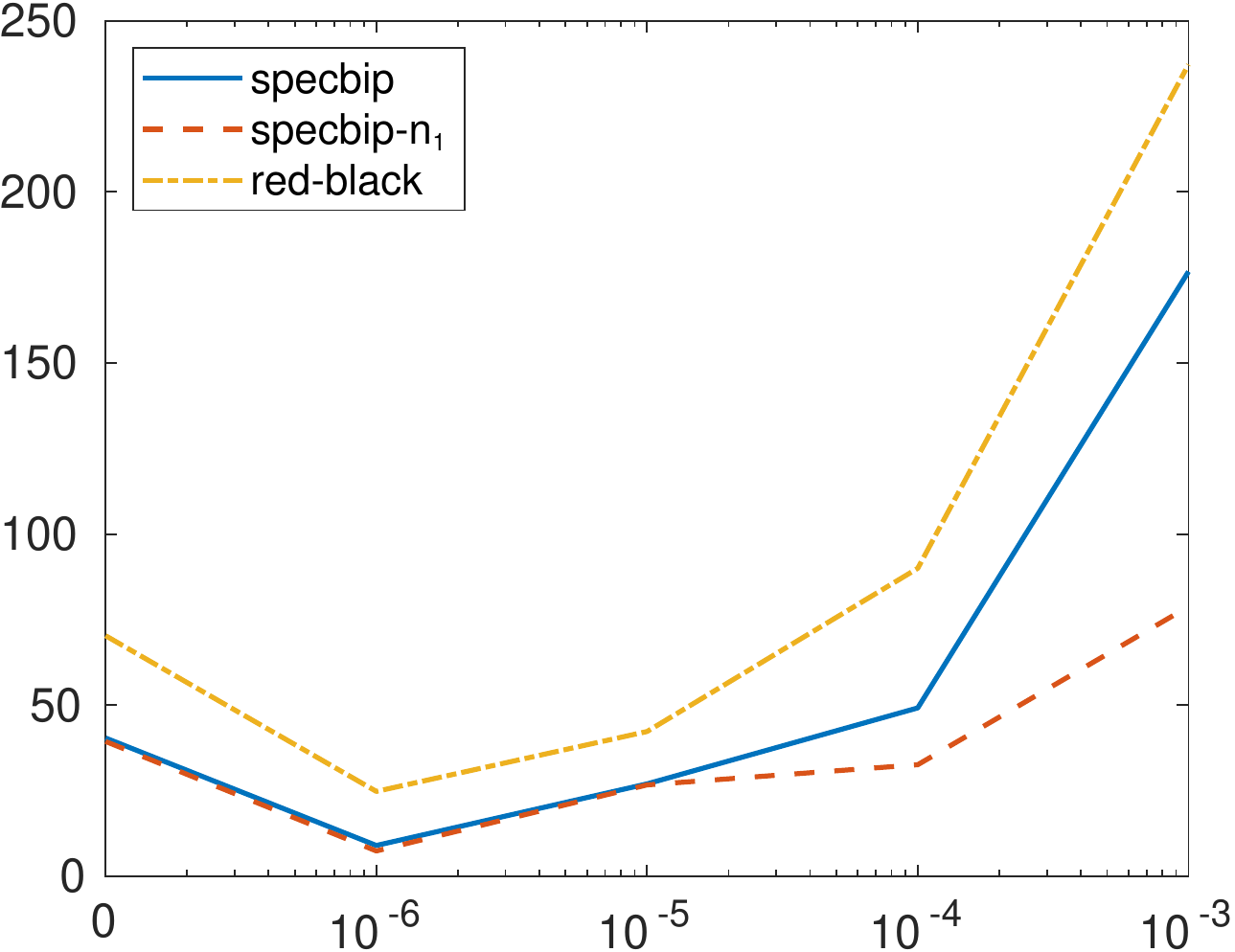}\hfill
\includegraphics[width=.48\textwidth]{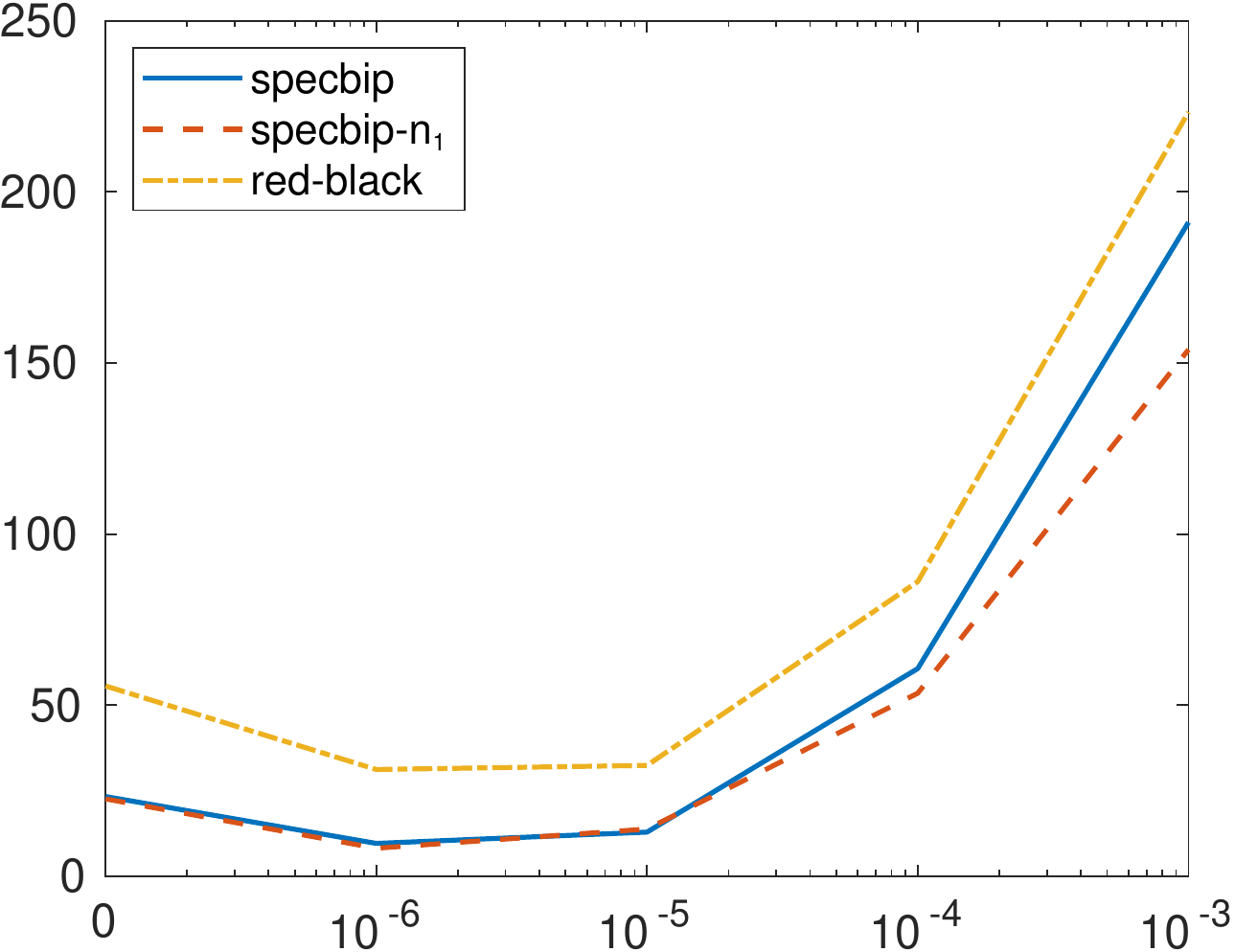}\hfill
\caption{Bipartition error $\widetilde{\mathcal{E}}_N$ for
$(n_1,n_2)=(512,256)$; on the left unweighted random graphs, on the right
weighted random graphs, both with $\xi=10^{-2}$, as a function of 
$\eta=0,10^{-6},10^{-5},\ldots,10^{-3}$.}
\label{fig1}
\end{center}
\end{figure}

From the tables, it can also be observed that knowing in advance the
cardinality of the two sets $\mathcal{V}_1$ and $\mathcal{V}_2$ leads in some
cases to a substantial improvement in the quality of the results.

\begin{figure}[hbt]
\begin{center}
\includegraphics[width=.48\textwidth]{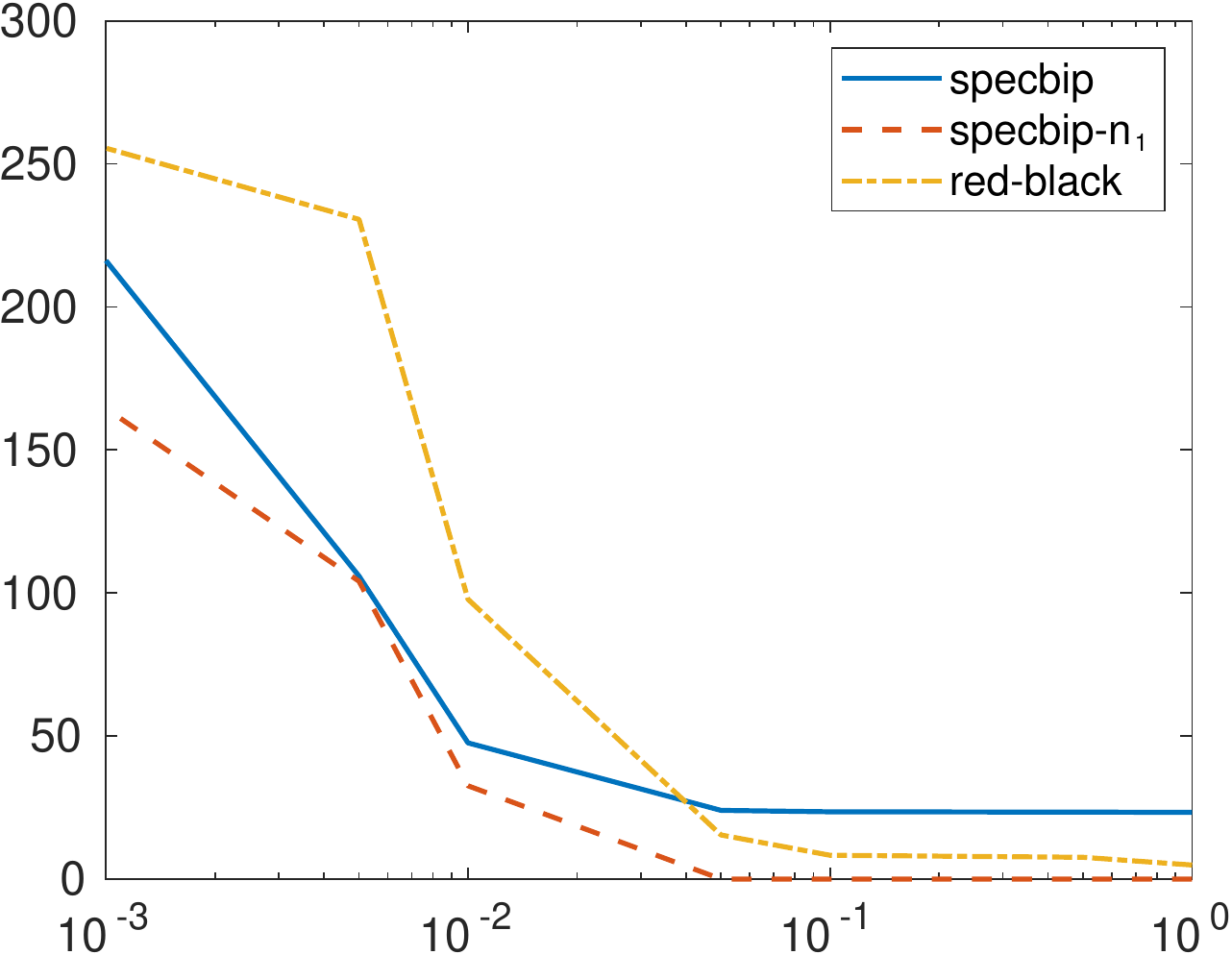}\hfill
\includegraphics[width=.48\textwidth]{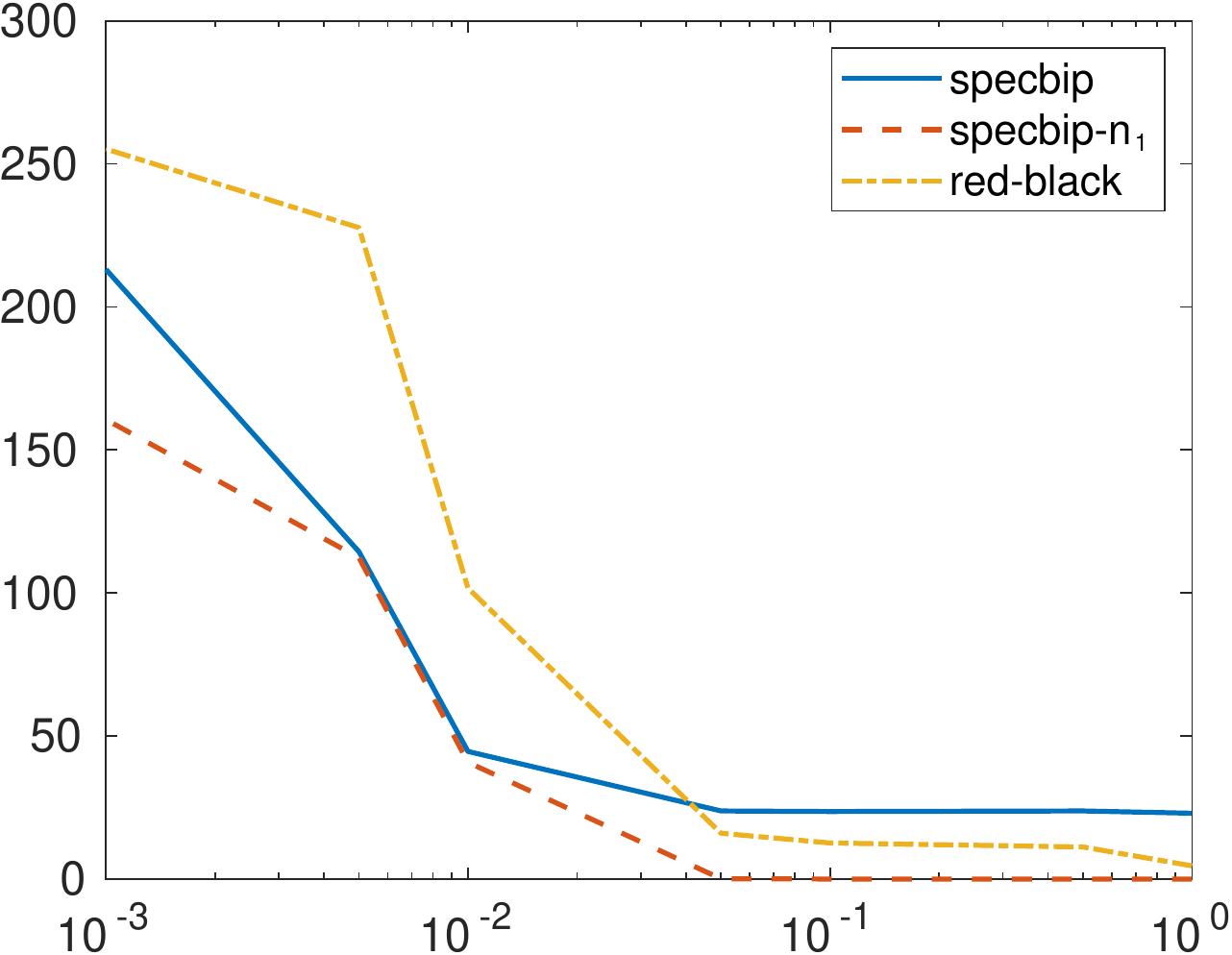}\hfill
\caption{Bipartition error $\widetilde{\mathcal{E}}_N$ for
$(n_1,n_2)=(512,256)$; on the left unweighted random graphs, on the right
weighted random graphs, both with $\eta=10^{-2}$, as a function of 
$\xi=10^{-3},10^{-2},10^{-1},1$.}
\label{fig2}
\end{center}
\end{figure}

To further investigate the behavior of the bipartition error, we construct a
matrix $A$ of the form \eqref{adjbip}, letting $n_1=512$ and $n_2=256$, with a
sparse random block $C$ having density $\xi=10^{-2}$.
After randomly permuting the rows and columns, we apply our algorithms to
this matrix, as well as to those perturbed by replacing the (1,1) and (2,2)
blocks by a sparse matrix with density $\eta=10^{-6},10^{-5},\ldots,10^{-3}$.
The graph on the left of Figure~\ref{fig1} shows the value of the bipartization
error $\mathcal{E}_N$ obtained when the three methods are applied to an
unweighted graph, the one on the right correspond to a weighted graph.
All values are averaged over 10 realizations of the random matrices.
Both graphs show that the bipartization determined by our approaches is closer
to the correct one, with respect to red-black ordering, with
\texttt{specbip}-$n_1$ producing slightly better results. The performance of
all algorithms degrades as the perturbation becomes less sparse.

In Figure~\ref{fig2}, we display the value of $\mathcal{E}_N$ for the same
examples, for a fixed $\eta=10^{-2}$, and letting the density $\xi$ of the
block $C$ take values in $[10^{-3},1]$.
The red-black ordering method is more accurate than the \texttt{specbip}
algorithm for very sparse networks, while providing the correct cardinality of
the set $\mathcal{V}_1$ to \texttt{specbip}-$n_1$ produces the best results.

\subsection{The NDyeast network}

We illustrate the performance of the spectral bipartization algorithm when applied to the
detection of anti-communities by analyzing a case study.
The \emph{NDyeast} network describes the protein interaction network for yeast, each edge representing an interaction between two proteins \cite{jmbo01}.
The data set is available at \cite{webpajek}.
In this section we analyze this network, testing the presence of a
bipartization or of a large anti-community.

The \emph{NDyeast} network has 2114 nodes. There are 74 self-loops (nodes
connected only to themselves) and 268 nodes disconnected from the network.
The adjacency matrix resulting by removing both the self-loops and the
isolated nodes has size $n=1846$, and it has 149 connected components.
They were identified by the \textsf{getconcomp} function from the PQser Matlab
toolbox \cite{CFR}. 

In the case of a reducible adjacency matrix, the spectral bipartization
algorithm should treat each single connected component one at a time. 
Since most of the components in the \emph{NDyeast} network are very small,
often just 2 or 3 nodes, we consider the only component with more than 10
nodes, which has 1458 nodes.
We process the reduced adjacency matrix $A$ with our bipartization method.

The algorithm determines $n_0=564$ zero eigenvalues (see Figure~\ref{yeeig})
and identifies two sets of nodes with cardinalities $n_1=1011$ and $n_2=447$.

\begin{figure}[hbt]
\begin{center}
\includegraphics[width=.6\textwidth]{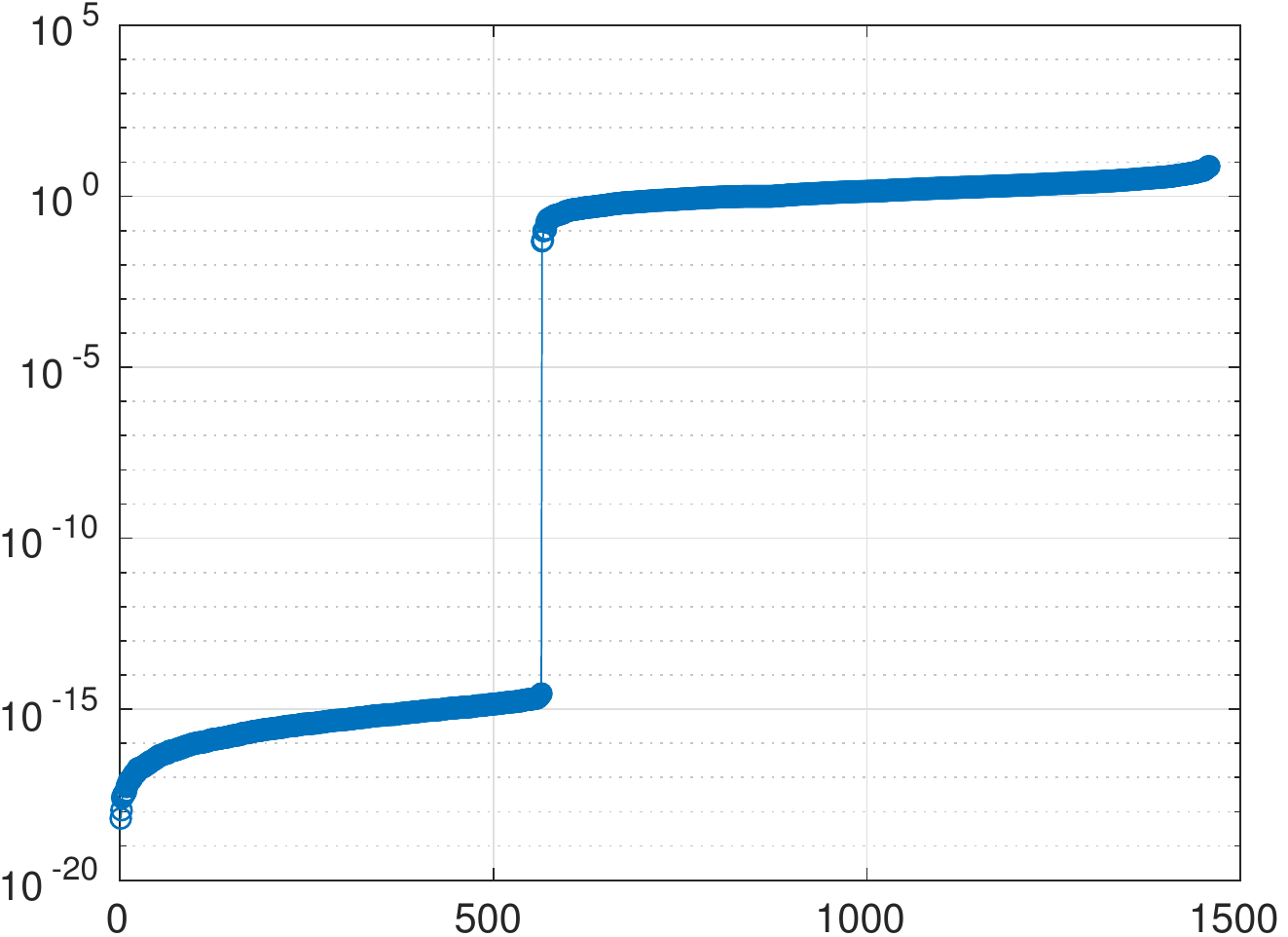}
\caption{Spectrum of the reduced adjacency matrix for the \emph{NDyeast} network.}
\label{yeeig}
\end{center}
\end{figure}

The starting adjacency matrix is displayed in the top-left spy plot of
Figure~\ref{ye1fig}.
The top-right plot shows the same matrix after the ordering produced by the
spectral bipartization algorithm is applied to its rows and columns.
This graph clearly displays that there is a large group of nodes in the
\emph{NDyeast} network that do not communicate much among themselves, that is,
an anti-community.
In the same graph we show the bipartization detected by the algorithm by means
of red lines.

\begin{figure}[hbt]
\begin{center}
\includegraphics[width=.49\textwidth]{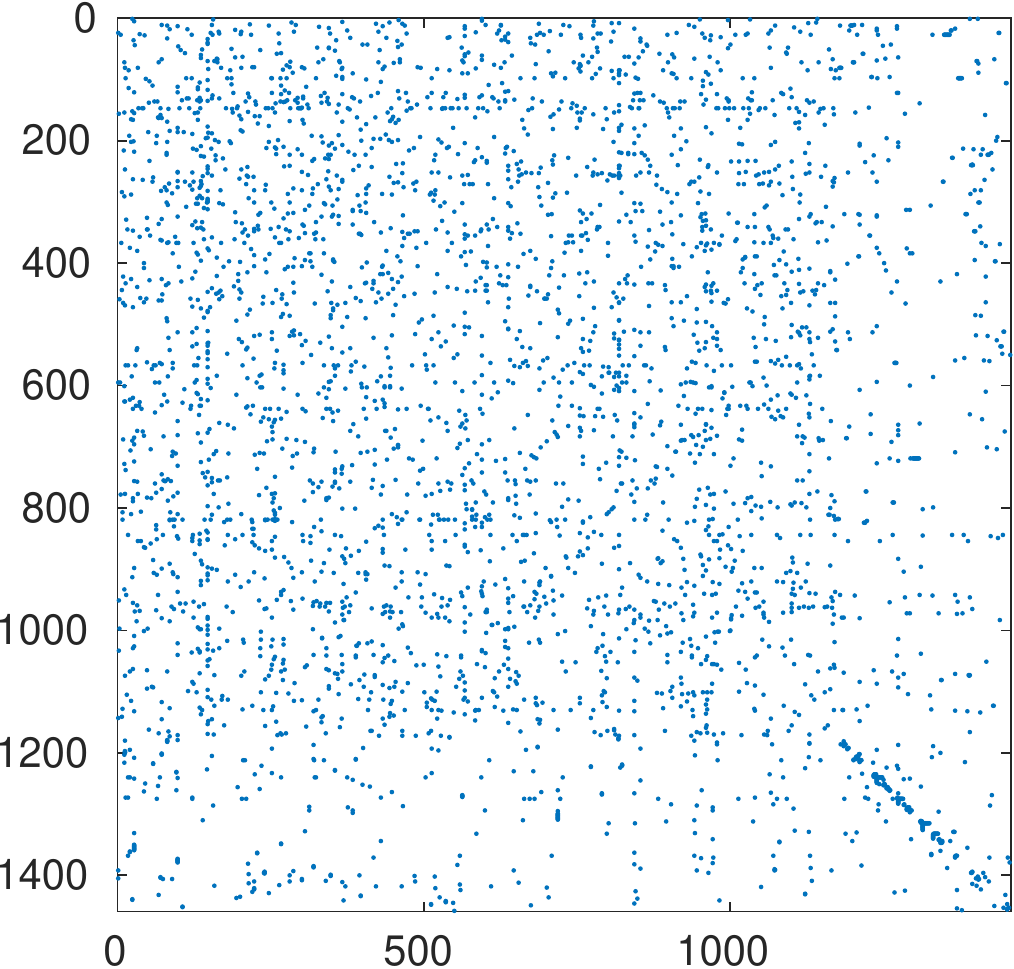}\hfill
\includegraphics[width=.49\textwidth]{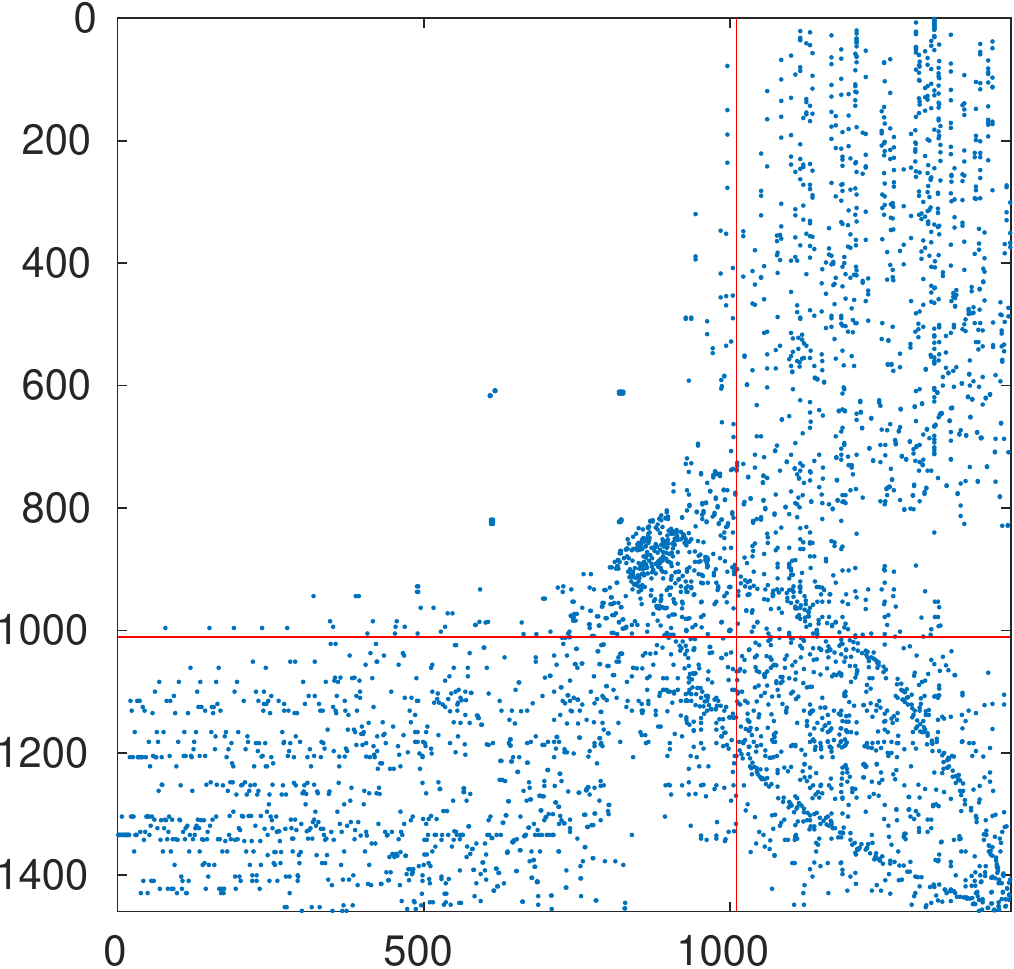}
\includegraphics[width=.49\textwidth]{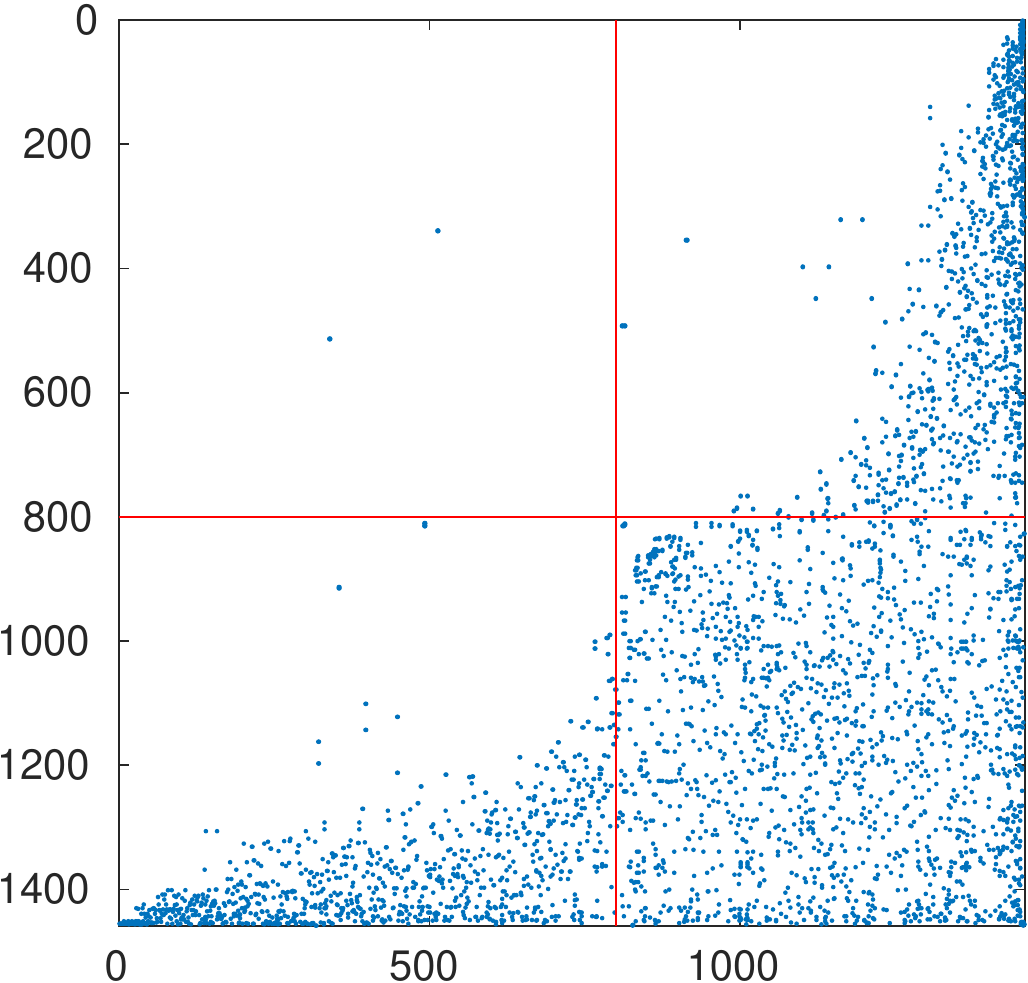}\hfill
\includegraphics[width=.49\textwidth]{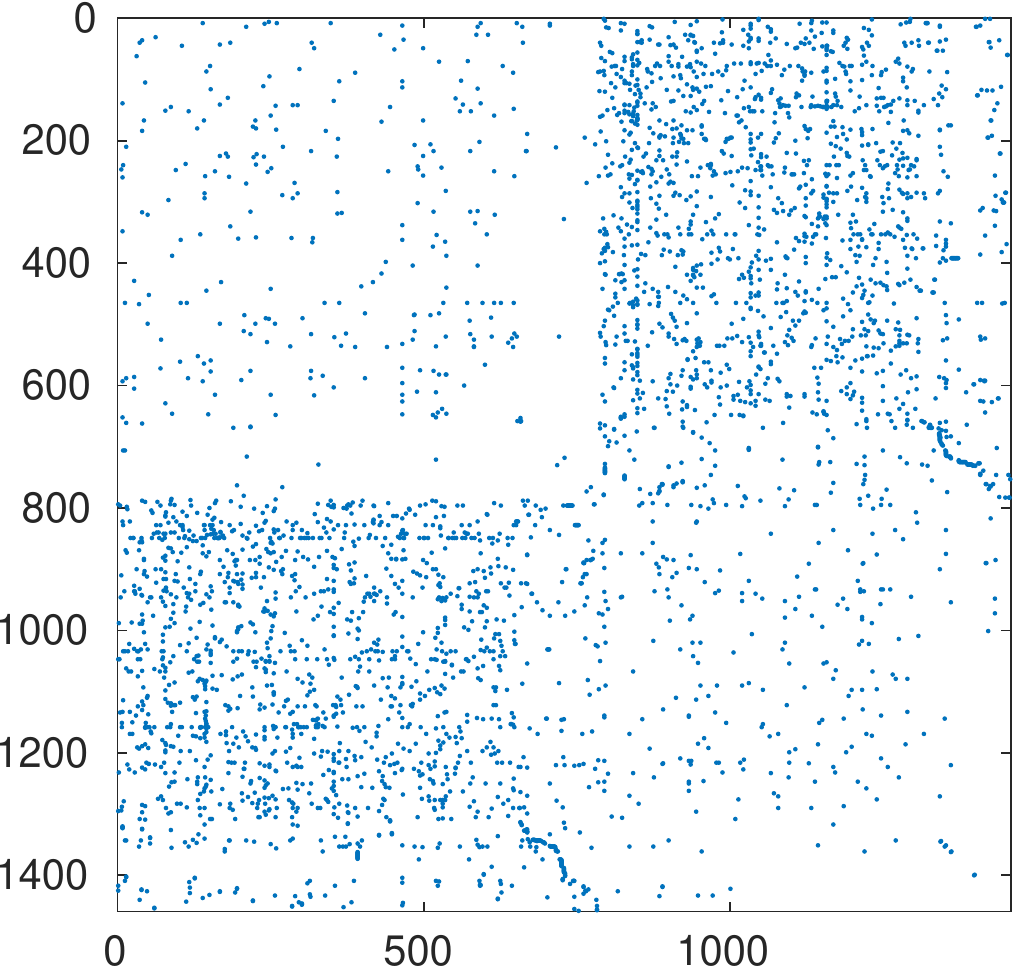}\\
\caption{Analysis of the \emph{NDyeast} network: top-left, the starting
adjacency matrix; top-right, the node reordering produced by the spectral
algorithm; bottom-left, the reordering induced by the choice
$(\tilde{n}_1,\tilde{n}_2)=(800,658)$; bottom-right, the output of the
red-black ordering method.}
\label{ye1fig}
\end{center}
\end{figure}

Our algorithm can also be applied by supplying the values of $(n_1,n_2)$,
rather than estimating them from the number of zero eigenvalues. 
If we do this by setting $\tilde{n}_1=800$ and $\tilde{n}_2=658$, we obtain the
bottom left graph in the same figure. 
It shows that in the group of the first 800 proteins, only four of them
directly interact.

The bottom-right graph of Figure~\ref{ye1fig} displays the result of 
the red-black ordering method, which does not supply any useful information.

We remark that a data set similar to \emph{NDyeast} (but
different) is available at \cite{webpajek}.
It is called simply \emph{yeast}, it consists of 2361 nodes, and it refers to
the paper \cite{slzrc03}.
By processing this data set with our spectral algorithm, we obtain results very
similar to the ones displayed in Figure~\ref{ye1fig}.

\subsection{The geom network}

We also applied the spectral bipartization algorithm to a weighted graph,
namely, the \emph{geom} network,
It is extracted from the Computational Geometry Database \emph{geombib} by B.
Jones (version 2002). Nodes represent authors; the value of the entry $(i,j)$
of the adjacency matrix is the number of papers coauthored by authors $i$ and
$j$. The data set is available at \cite{webpajek}.

The \emph{geom} network has 7343 nodes and 11898 edges. After removing 1185
isolated nodes, the network presents 875 connected components, the largest of
which has 3621 nodes. We applied the bipartization method to the adjacency
matrix associated to this component.

\begin{figure}[hbt]
\begin{center}
\includegraphics[width=.6\textwidth]{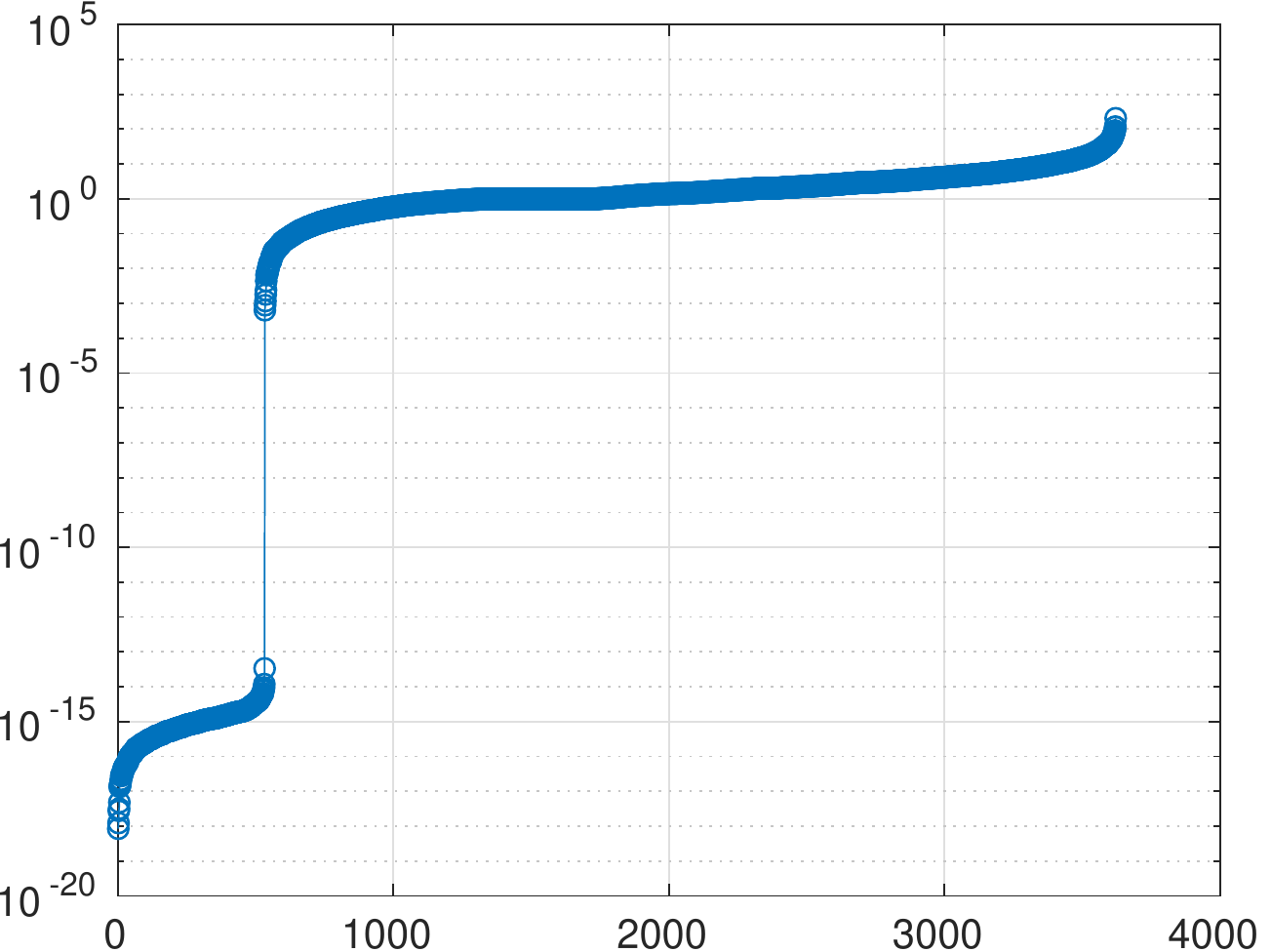}
\caption{Spectrum of the reduced adjacency matrix for the \emph{geom} network.}
\label{geeig}
\end{center}
\end{figure}

The eigenvalues are displayed in Figure~\ref{geeig}: 533 of them are detected
as being numerically zero, and the cardinalities of the two node sets are $n_1=2077$ and
$n_2=1544$.
The left graph of Figure~\ref{ge1fig} reports the spy plot of the original
adjacency matrix; the graph on the right shows the matrix reordered by the
spectral bipartization algorithm.
The graph highlights the presence of an anti-community of about 1000 authors,
who did not collaborate with each other when writing papers.

\begin{figure}[hbt]
\begin{center}
\includegraphics[width=.49\textwidth]{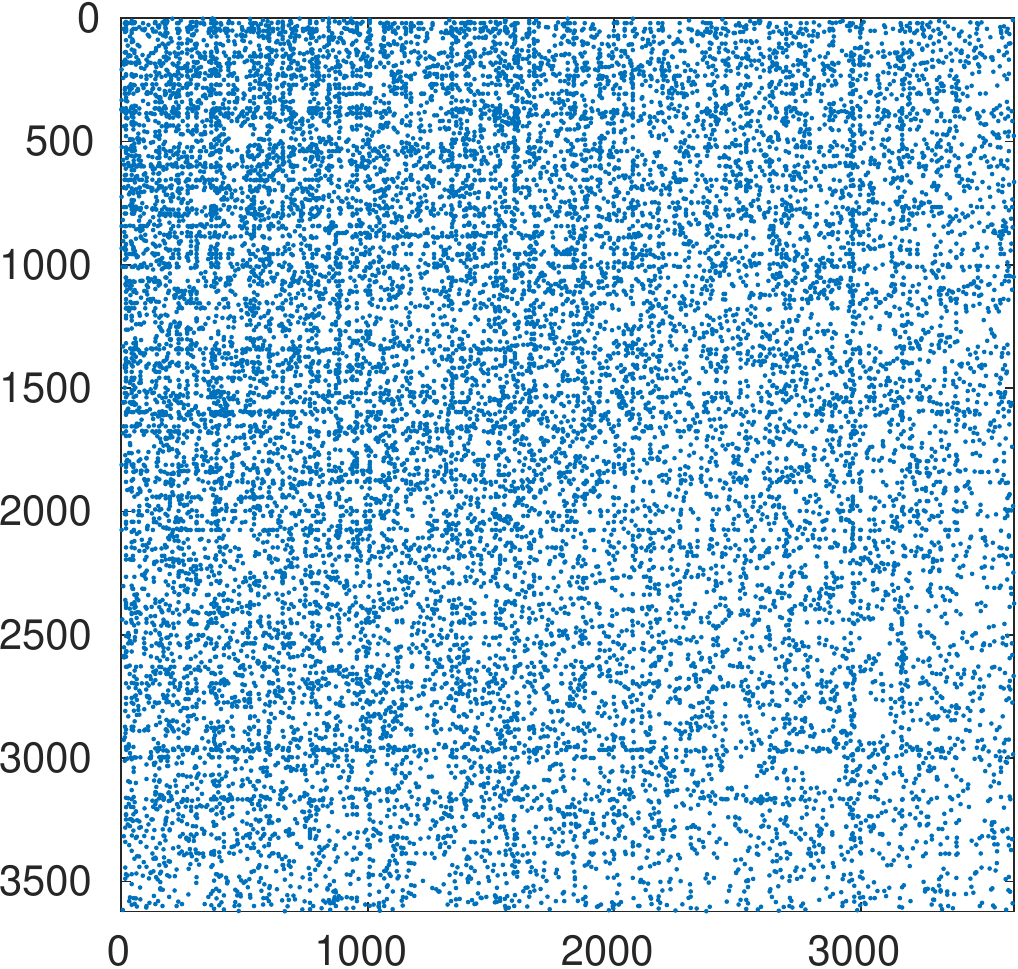}\hfill
\includegraphics[width=.49\textwidth]{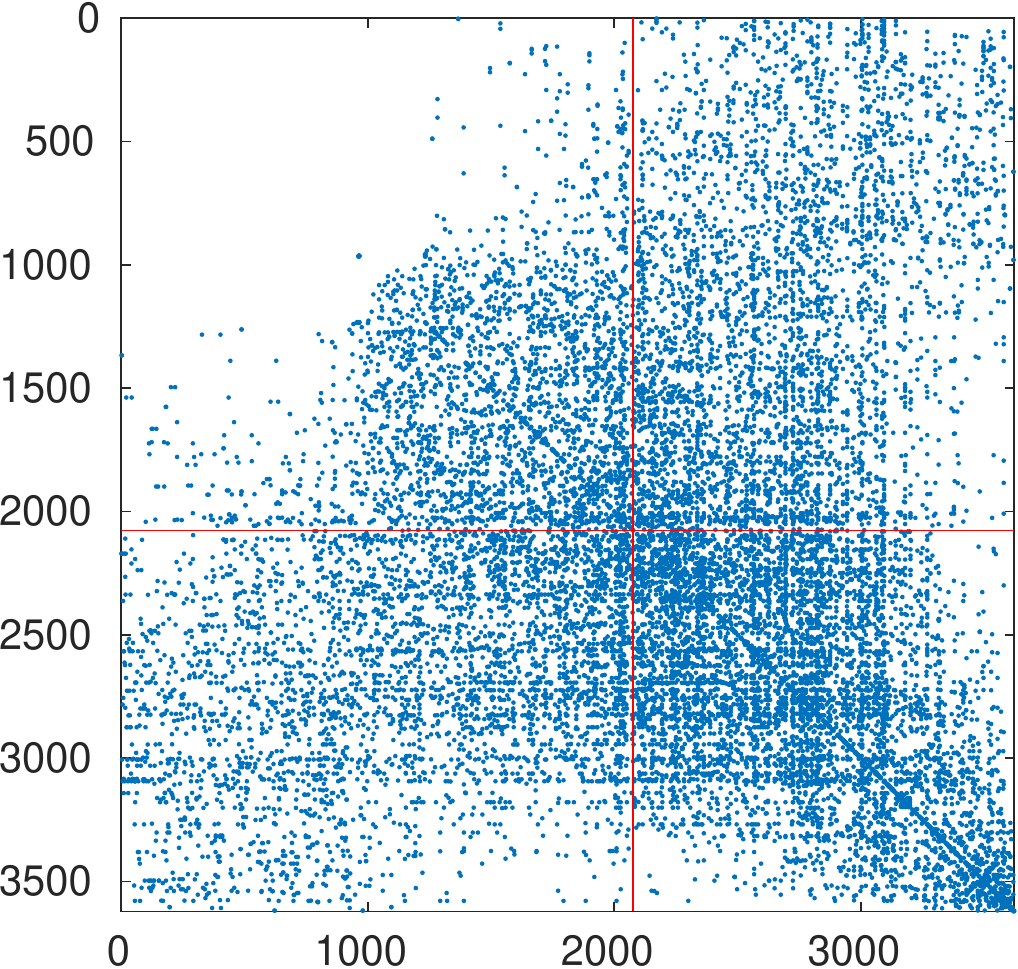}
\caption{Analysis of the \emph{geom} network: on the left, the starting
adjacency matrix; on the right, the node reordering produced by the spectral
algorithm.}
\label{ge1fig}
\end{center}
\end{figure}

\section{Conclusion}\label{sec6}

%Bipartization of a graph is a difficult problem, when it can be done.
This paper
describes how an approximate bipartization of a given graph can be determined
by solving a sequence of simple optimization problems. The technique can also 
be applied to detect anti-communities.
Computed examples illustrate the performance of the method described.

\section*{Acknowledgment}
The authors would like to thank the referees for comments.


\begin{thebibliography}{99}
\bibitem{bapat}
R. B. Bapat, Graphs and Matrices, Springer, London, 2010.
\bibitem{webpajek}
V. Batagelj and A. Mrvar, Pajek data sets (2006).
Available at \hfill\break\url{http://vlado.fmf.uni-lj.si/pub/networks/data/}
%\bibitem{RB86} 
%R. Bhatia, The distance between the eigenvalues of Hermitian matrices, Proc. Amer. Math.
%Soc., 96 (1986), pp. 41--42.
\bibitem{BM}
J. A. Bondy and U. S. R. Murty, Graph Theory with Applications, Macmillan, 
London, 1976.
\bibitem{BE}
S. P. Borgatti and M. G. Everett, Models of Core/Periphery Structures, Social
Networks, 21 (1999), pp. 375--395.
\bibitem{CYC}
L. Chen, Q. Yu, and B. Chen, Anti-modularity and anti-community detecting in
complex networks, Inf. Sci., 275 (2014), pp. 293--313.
\bibitem{CFR}
A. Concas, C. Fenu, and G. Rodriguez, PQser: A Matlab package for spectral seriation,
Numer. Algorithms, 80 (2019), pp. 879--902.
%\bibitem{DGK}
%P. Diaconis, R. L. Graham, and W. M. Kantor, The mathematics of perfect shuffles, Advances
%in Applied Mathematics, 4 (1983), pp. 175--196.
\bibitem{Es}
E. Estrada, The Structure of Complex Networks: Theory and Applications, Oxford University
Press, Oxford, 2011.
\bibitem{EGG}
E. Estrada and J. G\'omez--Garde\~nes, Network bipartivity and the transportation 
efficiency of European passenger airlines, Physica D, 323-324 (2016), pp. 57--63.
\bibitem{Esbook}
E. Estrada and P. A. Knight, A First Course in Network Theory, Oxford University
Press, Oxford, 2015.
\bibitem{FasTud}
D. Fasino and F. Tudisco, A modularity based spectral method for simultaneous community 
and anti-community detection, Linear Algebra Appl., 542 (2017), pp. 605--623.
\bibitem{MatBGL} 
D. Gleich, MatlabBGL - A Matlab Graph Library, \hfill\break
\url{https://www.cs.purdue.edu/homes/dgleich/packages/matlab_bgl/}
\bibitem{GVL}
G. H. Golub and C. F. Van Loan, Matrix Computations, 4th ed., Johns Hopkins University
Press, Baltimore, 2013.
\bibitem{H89} 
N. Higham, Matrix nearness problems and applications, in M. J. C. Gover and S. Barnett, 
eds., Applications of Matrix Theory, Oxford University Press, Oxford, 1989, pp. 1--27.
\bibitem{jmbo01}
H. Jeong, S. Mason, A.-L. Barab\'asi, and Z. N. Oltvai, Lethality and
centrality in protein networks, Nature, 411 (2001), pp. 41--42.
\bibitem{LLDM}
J. Leskovec, K. J. Lang, A. Dasgupta, and M. W. Mahoney, Statistical properties
of community structure in large social and information networks, in: Proc. 17th
Int. Conf. World Wide Web (WWW'08), 2008, pp. 695--704.
\bibitem{MBHG}
J. L. Morrison, R. Breitling, D. J. Higham, and D. R. Gilbert, A lock-and-key model for
protein-protein interactions, Bioinformatics, 2 (2006), pp. 2012--2019.
\bibitem{Ne}
M. E. J. Newman, Networks: An Introduction, Oxford University Press, Oxford, 2010.
%\bibitem{NPR} 
%S. Noschese, L. Pasquini, and L. Reichel, Tridiagonal Toeplitz matrices: properties and 
%novel applications, Numer. Linear Algebra Appl., 20 (2013), pp. 302--326.
\bibitem{PDFV}
G. Palla, I. Der\`enyi, I. Farkas, and T. Vicsek, Uncovering the overlapping
community structure of complex networks in nature and society, Nature, 435
(2005), pp. 814--818.
\bibitem{RAK}
U. N. Raghavan, R. Albert, and S. Kumara, Near linear time algorithm to detect
community structures in large-scale networks, Phys. Rev. E, 76 (2007), 036106.
\bibitem{RPFM}
M. P. Rombach, M. A. Porter, J. H. Fowler, and P. J. Mucha, Core-periphery
structure in networks, SIAM Rev., 59 (2017), pp. 619--646.
\bibitem{R}
R. Roth, On the eigenvectors belonging to the minimum eigenvalue of an essentially 
nonnegative symmetric matrix with bipartite graph, Linear Algebra Appl., 118 (1989), pp. 1--10.
\bibitem{slzrc03}
S. Sun, L. Ling, N. Zhang, G. Li, and R. Chen, Topological structure
analysis of the protein-protein interaction network in budding yeast, Nucleic
Acids Research, 31 (2003), pp. 2443--2450.
\bibitem{TVH}
A. Taylor, J. K. Vass, and D. J. Higham, Discovering bipartite substructure in
directed networks, LMS J. Comput. Math., 14 (2011), pp. 72--86.
%\bibitem{TB}
%L. N. Trefethen and D. Bau III, Numerical Linear Algebra, SIAM, Philadelphia, 1997.
\bibitem{YCL}
B. Yang, W. K. Cheung, and J. M. Liu, Community mining from signed social
networks, IEEE Trans. Knowl. Data Eng., 19 (2007), pp. 1333--1348.
\bibitem{Yann}
M. Yannakakis, Edge-deletion problems, SIAM J. Comput., 10 (1981),
pp. 297--309.
\end{thebibliography}
\end{document}